\newcommand{\newsection}[1]{\setcounter{equation}{0} \section{#1}}
\newcommand{\bea}{\begin{eqnarray}}
\newcommand{\eea}{\end{eqnarray}}
\newcommand{\clb}{\mathcal{B}}
\newcommand{\cld}{\mathcal{D}}
\newcommand{\cle}{\mathcal{E}}
\newcommand{\clf}{\mathcal{F}}
\newcommand{\clh}{\mathcal{H}}
\newcommand{\clk}{\mathcal{K}}
\newcommand{\cll}{\mathcal{L}}
\newcommand{\cls}{\mathcal{S}}
\newcommand{\clw}{\mathcal{W}}
\newcommand{\z}{\bm{z}}
\newcommand{\w}{\bm{w}}
\newcommand{\D}{\mathbb{D}}
\newcommand{\raro}{\rightarrow}
\def \qed {\hfill \vrule height6pt width 6pt depth 0pt}
\def\textmatrix#1&#2\\#3&#4\\{\bigl({#1 \atop #3}\ {#2 \atop #4}\bigr)}
\def\dispmatrix#1&#2\\#3&#4\\{\left({#1 \atop #3}\ {#2 \atop #4}\right)}
\newcommand{\be}{\begin{equation}}
\newcommand{\ee}{\end{equation}}
\newcommand{\ben}{\begin{eqnarray*}}
\newcommand{\een}{\end{eqnarray*}}
\newcommand{\NI}{\noindent}
\newcommand{\bi}{\begin{itemize}}
\newcommand{\ei}{\end{itemize}}
\theoremstyle{definition}
\theoremstyle{plain}
\newtheorem{thm}{Theorem}[section]
\newtheorem{cor}[thm]{Corollary}
\newtheorem{prop}[thm]{Proposition}
\theoremstyle{definition}
\newtheorem{rem}[thm]{Remark}
\numberwithin{equation}{section}
\let\phi=\varphi
\begin{document}

\title[Invariant subspaces in the polydisc ]{Characterization of Invariant subspaces in the
polydisc}

\author[Maji]{Amit Maji}
\address{Indian Statistical Institute, Statistics and Mathematics Unit, 8th Mile, Mysore Road, Bangalore, 560059, India}
\email{amit.iitm07@gmail.com}

\author[Mundayadan]{Aneesh Mundayadan}
\address{Indian Statistical Institute, Statistics and Mathematics Unit, 8th Mile, Mysore Road, Bangalore, 560059, India}
\email{aneeshkolappa@gmail.com}

\author[Sarkar]{Jaydeb Sarkar}
\address{Indian Statistical Institute, Statistics and Mathematics Unit, 8th Mile, Mysore Road, Bangalore, 560059, India}
\email{jay@isibang.ac.in, jaydeb@gmail.com}

\author[Sankar]{Sankar T. R.}
\address{Indian Statistical Institute, Statistics and Mathematics Unit, 8th Mile, Mysore Road, Bangalore, 560059, India}
\email{sankartr90@gmail.com}

\subjclass[2010]{47A15, 47A13, 47A80, 30H10, 30H05, 32A10, 32A70,
46E22, 47B32}

\keywords{Invariant subspaces, commuting isometries, Hardy space,
polydisc, bounded analytic functions}

%\today

\begin{abstract}
We give a complete characterization of invariant subspaces for
$(M_{z_1}, \ldots, M_{z_n})$ on the Hardy space $H^2(\mathbb{D}^n)$
over the unit polydisc $\mathbb{D}^n$ in $\mathbb{C}^n$, $n >1$. In
particular, this yields a complete set of unitary invariants for
invariant subspaces for $(M_{z_1}, \ldots, M_{z_n})$ on
$H^2(\mathbb{D}^n)$, $n > 1$. As a consequence, we classify a large
class of $n$-tuples, $n > 1$, of commuting isometries. All of our
results hold for vector-valued Hardy spaces over $\mathbb{D}^n$, $n
> 1$. Our invariant subspace theorem solves the well-known open problem on
characterizations of invariant subspaces of the Hardy space over the
unit polydisc.
\end{abstract}

\maketitle

\section{Introduction}

An important open problem in multivariable operator theory and
function theory of several complex variables is the question of a
Beurling type representations of joint invariant subspaces for the
$n$-tuple of multiplication operators $(M_{z_1}, \ldots, M_{z_n})$
on $H^2(\D^n)$, $n>1$. Here $H^2(\D^n)$ denotes the Hardy space over
the unit polydisc $\D^n$ in $\mathbb{C}^n$ (see Section 2 for
notation and definitions). The main obstacle here seems to be the
subtleties of the theory of holomorphic functions in several complex
variables. This problem is compounded by another difficulty
associated with the complex (and mostly unknown) structure of
$n$-tuples, $n >1$, of commuting isometries on Hilbert spaces.

In this paper, we answer the above question by providing a complete
list of natural conditions on closed subspaces of $H^2(\D^n)$. Here
we use the analytic representations of shift invariant subspaces,
representations of Toeplitz operators on the unit disc, geometry of
tensor product of Hilbert spaces and identification of bounded
linear operators under unitary equivalence to overcome such
difficulties.

As motivation, recall that if $n = 1$, then the celebrated Beurling
theorem \cite{B} says that a non-zero closed subspace $\cls$ of
$H^2(\D)$ is invariant for $M_z$ if and only if there exists an
%an isometry $X : H^2(\D) \raro
%H^2(\D)$ such that $X M_z = M_z X$ and $\cls = X H^2(\D)$. In this
%case it follows that $X = M_{\theta}$ for some
inner function $\theta \in H^\infty(\D)$ such that
\[
\cls = \theta H^2(\D).
\]
Note also that it follows (or the other way around) in particular
from the above representation of $\cls$ that
\[
\cls \ominus z \cls = \theta \mathbb{C},
\]
and so
\[
\cls = \mathop{\oplus}_{m=0}^\infty z^m (\cls \ominus z \cls).
\]
One may now ask whether an analogous characterization holds for
invariant subspaces for $(M_{z_1}, \ldots, M_{z_n})$ on $H^2(\D^n)$,
$n > 1$. However, Rudin's pathological examples (see Rudin
\cite{Ru}, page 70) indicates that the above Beurling type
properties does not hold in general for invariant subspaces for
$(M_{z_1}, \ldots, M_{z_n})$ on $H^2(\D^n)$, $n > 1$: There exist
invariant subspaces $\cls_1$ and $\cls_2$ for $(M_{z_1}, M_{z_2})$
on $H^2(\D^2)$ such that

(1) $\cls_1$ is not finitely generated, and

(2) $\cls_2 \cap H^\infty(\mathbb{D}^2) = \{0\}$.

\NI In fact, Beurling type invariant subspaces for $(M_{z_1},
\ldots, M_{z_n})$ on $H^2(\D^n)$, $n > 1$, are rare. They are
closely connected with the tensor product structure of the Hardy
space (or the product domain $\D^n$).

Therefore, the structure of invariant subspaces for $(M_{z_1},
\ldots, M_{z_n})$ on $H^2(\D^n)$, $n > 1$, is quite complicated. The
list of important works in this area include the papers by Agrawal,
Clark, and Douglas \cite{ACD}, Ahern and Clark \cite{AC}, Douglas
and Yan \cite{DY}, Douglas, Paulsen, Sah and Yan \cite{DPSY}, Guo
\cite{KG}, Fang \cite{F1}, Guo and Yang \cite{GY}, Izuchi \cite{I},
Mandrekar \cite{M}, Putinar \cite{P}, Yang \cite{QY} etc. (also see
the references therein). We also refer to Aleman \cite{AA}, Axler
and Bourdon \cite{AB}, Gamelin \cite{G}, Guo \cite{KG1}, Guo, Sun,
Zheng and Zhong \cite{GZ} and Rudin \cite{WR} for related work on
invariant subspaces in both one and several complex variables.

In this paper, first, we represent $H^2(\D^{n+1})$, $n \geq 1$, by
the $H^2(\D^n)$-valued Hardy space $H^2_{H^2(\D^n)}(\D)$. Under this
identification, we prove that $(M_{z_1}, M_{z_2}, \ldots,
M_{z_{n+1}})$ on $H^2(\D^{n+1})$ corresponds to $(M_z, M_{\kappa_1},
\ldots, M_{\kappa_n})$ on $H^2_{H^2(\D^n)}(\D)$, where $\kappa_i \in
H^\infty_{\clb(H^2(\D^n))}(\D)$, $i = 1, \ldots, n$, is a constant
as well as simple and explicit $\clb(H^2(\D^n))$-valued analytic
function (see Theorem \ref{thm-1}, or part (i) of Theorem
\ref{thm-main} below). Then we prove that a closed subspace $\cls
\subseteq H^2_{H^2(\D^n)}(\D)$ is invariant for $(M_z, M_{\kappa_1},
\ldots, M_{\kappa_n})$ if and only if $\cls$ is of Beurling
\cite{B}, Lax \cite{L} and Halmos \cite{H} type and the
corresponding Beurling, Lax and Halmos inner function solves, in an
appropriate sense, $n$ operator equations explicitly and uniquely
(see Theorem \ref{thm-2}, or part (ii) of Theorem \ref{thm-main}
below, and Theorem \ref{thm-unique}).

Recall that two $m$-tuples, $m \geq 1$, of commuting operators
$(A_1, \ldots, A_m)$ on $\clh$ and $(B_1, \ldots, B_m)$ on $\clk$
are said to be \textit{unitarily equivalent} if there exists a
unitary operator $U : \clh \raro \clk$ such that $U A_i = B_i U$ for
all $i = 1, \ldots, m$.

We now summarize the main contents, namely, Theorems \ref{thm-1} and
\ref{thm-2} restricted to the scalar-valued Hardy space case, of
this paper in the following statement.

\begin{thm}\label{thm-main}
Let $n$ be a natural number, and let $H_n = H^2(\D^n)$. Let
$\kappa_i \in H^\infty_{\clb(H_n)}(\D)$ denote the
$\clb(H_n)$-valued constant function on $\D$ defined by
\[
\kappa_i(w) = M_{z_i} \in \clb(H_n),
\]
for all $w \in \D$, and let $M_{\kappa_i}$ denote the multiplication
operator on $H^2_{H_n}(\D)$ defined by
\[
M_{\kappa_i} f = \kappa_i f,
\]
for all $f \in H^2_{H_n}(\D)$ and $i = 1, \ldots, n$. Then the
following statements hold true:

(i) $(M_{z_1}, M_{z_2} \ldots, M_{z_{n+1}})$ on $H^2(\D^{n+1})$ and
$(M_z, M_{\kappa_1}, \ldots, M_{\kappa_n})$ on $H^2_{H_n}(\D)$ are
unitarily equivalent.

(ii) Let $\cls$ be a closed subspace of $H^2_{H_n}(\D)$. Let $\clw =
\cls \ominus z \cls$, and let
\[
\Phi_i(w) = P_{\clw}(I_{\cls} - w P_{\cls} M_z^*)^{-1}
M_{\kappa_i}|_{\clw},
\]
for all $w \in \D$ and $i = 1, \ldots, n$. Then $\cls$ is invariant
for $(M_z, M_{\kappa_1}, \ldots, M_{\kappa_n})$ if and only if
$(M_{\Phi_1}, \ldots,M_{\Phi_n})$ is an $n$-tuple of commuting
shifts on $H^2_{\clw}(\D)$ and there exist an inner function $\Theta
\in H^\infty_{\clb(\clw, H_n)}(\D)$ such that
\[
\cls = \Theta H^2_{\clw}(\D),
\]
and
\[
\kappa_i \Theta = \Theta \Phi_i,
\]
for all $i = 1, \ldots, n$.
\end{thm}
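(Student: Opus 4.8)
The plan is to treat the two parts separately: (i) is the canonical tensor factorisation of the Hardy space of a product domain, and (ii) is the Beurling--Lax--Halmos theorem for the single shift $M_z$, fed into a transfer argument for the remaining operators $M_{\kappa_i}$. For (i), I would single out the last variable and identify $H^2(\D^{n+1}) = H_n \otimes H^2(\D)$ (with $z_1,\ldots,z_n$ in the first tensor leg and $z_{n+1}$ in the second), and then identify $H_n \otimes H^2(\D) \cong H^2_{H_n}(\D)$ by sending $h \otimes z^k$ to the function $w \mapsto w^k h$; this sends monomials to monomials and is unitary. Under this chain $M_{z_{n+1}}$ becomes $I_{H_n}\otimes M_z$, i.e.\ multiplication by $w$, which is $M_z$ on $H^2_{H_n}(\D)$, while for $i \le n$ the operator $M_{z_i}$ becomes $M_{z_i}\otimes I$, i.e.\ multiplication by the constant $\clb(H_n)$-valued function $w\mapsto M_{z_i}$, which is $M_{\kappa_i}$; since $\|M_{z_i}\|_{\clb(H_n)}=1$, indeed $\kappa_i \in H^\infty_{\clb(H_n)}(\D)$. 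Part (i) is then bookkeeping once these identifications are fixed.

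For the easy implication of (ii), assume $\cls = \Theta H^2_\clw(\D)$ with $\Theta \in H^\infty_{\clb(\clw,H_n)}(\D)$ inner and $\kappa_i\Theta = \Theta\Phi_i$ with each $M_{\Phi_i}$ a shift on $H^2_\clw(\D)$. Since $M_\Theta$ intertwines the shifts, $M_z\cls = M_\Theta M_z H^2_\clw(\D) \subseteq \cls$, and $M_{\kappa_i}\cls = M_{\kappa_i}M_\Theta H^2_\clw(\D) = M_{\Theta\Phi_i}H^2_\clw(\D) = M_\Theta M_{\Phi_i}H^2_\clw(\D) \subseteq \cls$, so $\cls$ is invariant for the whole tuple. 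For the main implication, assume $\cls$ is invariant for $(M_z, M_{\kappa_1},\ldots,M_{\kappa_n})$, the case $\cls = \{0\}$ being trivial with $\Theta = 0$. Since $\bigcap_{m\ge 0} z^m H^2_{H_n}(\D) = \{0\}$, the isometry $M_z|_\cls$ is pure with wandering subspace $\clw = \cls\ominus z\cls$, and the canonical unitary $\Pi : H^2_\clw(\D)\to\cls$ intertwining the shifts commutes with $M_z$, hence is a multiplier $M_\Theta$ with $\Theta\in H^\infty_{\clb(\clw,H_n)}(\D)$; being isometric, $\Theta$ is inner — this is Beurling--Lax--Halmos, and gives $\cls = \Theta H^2_\clw(\D)$.

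It remains to produce the $\Phi_i$ and their properties. Since $\cls$ is $M_{\kappa_i}$-invariant, $R_i := M_{\kappa_i}|_\cls$ commutes with $M_z|_\cls$, so $M_\Theta^* R_i M_\Theta$ commutes with $M_z$ on $H^2_\clw(\D)$ and therefore equals $M_{\Phi_i}$ for some $\Phi_i \in H^\infty_{\clb(\clw)}(\D)$. Expanding $M_\Theta^* = \sum_{m\ge 0} w^m P_\clw (M_z|_\cls)^{*m}$ on $\cls$, using $(M_z|_\cls)^* = P_\cls M_z^*|_\cls$, and summing the resulting Neumann series identifies $\Phi_i$ with exactly the transfer function $P_\clw(I_\cls - wP_\cls M_z^*)^{-1}M_{\kappa_i}|_\clw$ in the statement. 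The intertwining $\kappa_i\Theta = \Theta\Phi_i$ follows from $M_\Theta M_{\Phi_i} = M_\Theta M_\Theta^* M_{\kappa_i}M_\Theta = P_\cls M_{\kappa_i}M_\Theta = M_{\kappa_i}M_\Theta$, the last step since $M_{\kappa_i}$ leaves $\cls = \mathrm{ran}\,M_\Theta$ invariant. The same compression computation gives $M_{\Phi_i}M_{\Phi_j} = M_\Theta^* M_{\kappa_i}M_{\kappa_j}M_\Theta$, symmetric in $i,j$ because $M_{\kappa_i}M_{\kappa_j} = M_{\kappa_j}M_{\kappa_i}$; and $M_{\Phi_i}^k = M_\Theta^* M_{\kappa_i}^k M_\Theta$, so each $M_{\Phi_i}$ is an isometry (as $M_{\kappa_i}$ is) with $\bigcap_k M_{\Phi_i}^k H^2_\clw(\D) = M_\Theta^*\big(\bigcap_k M_{\kappa_i}^k\cls\big) = \{0\}$, the last equality because $M_{\kappa_i}$ is pure — which holds since $M_{z_i}$ is a pure isometry on $H^2(\D^n)$. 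Hence $(M_{\Phi_1},\ldots,M_{\Phi_n})$ is an $n$-tuple of commuting shifts.

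The conceptual content is really in part (i): once the polydisc tuple is rewritten as a single disc shift with the explicit operator coefficients $\kappa_i$, the list of conditions in (ii) is essentially forced by Beurling--Lax--Halmos together with the requirement that each $M_{\kappa_i}$ preserve $\cls = \Theta H^2_\clw(\D)$. The two steps that need real care are matching $M_\Theta^* M_{\kappa_i}M_\Theta$ with the closed-form transfer function for $\Phi_i$ in the statement (Neumann-series bookkeeping), and — more substantively — showing the $M_{\Phi_i}$ are not merely commuting \emph{isometries} but commuting \emph{shifts}; the purity there is the one place where the fine structure of $H^2(\D^n)$, namely that each $M_{z_i}$ is a pure isometry, actually enters, the rest of the argument being formal.
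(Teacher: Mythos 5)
Your proposal is correct and follows essentially the same route as the paper: part (i) is the same tensor factorisation (the paper peels off $z_1$ instead of $z_{n+1}$, an immaterial difference by the symmetry of the variables, up to composing with the coordinate--permutation unitary so that the tuples correspond in the stated order), and for part (ii) your scheme --- Beurling--Lax--Halmos for $M_z|_{\cls}$, then identifying the compression $M_\Theta^* M_{\kappa_i} M_\Theta$ with the transfer function $P_{\clw}(I_{\cls}-wP_{\cls}M_z^*)^{-1}M_{\kappa_i}|_{\clw}$ by a Neumann-series expansion --- is exactly the content of the paper's Theorem \ref{thm-commutator} together with Remark \ref{rem-thetaw}, applied through the Wold--von Neumann unitary. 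Your purity argument for the $M_{\Phi_i}$ via $\bigcap_k M_{\Phi_i}^k H^2_{\clw}(\D)=\{0\}$ is the same fact the paper obtains from $\|V_i^{*m}f\|\le\|M_{\kappa_i}^{*m}f\|\to 0$ and unitary equivalence, so the two write-ups differ only in the order of the steps, not in substance.
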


The representation of the invariant subspace $\cls$, in terms of
$\clw$, $\Theta$ and $\{M_{\Phi_i}\}_{i=1}^n$, in part (ii) above is
unique in an appropriate sense (see Theorem \ref{thm-unique}).
Furthermore, the multiplier $\Phi_i$ can be represented as
\[
\Phi_i(w) = P_{\clw} M_{\Theta} (I_{H^2_{\clw}(\D)} - w M_z^*)^{-1}
M_{\Theta}^* M_{\kappa_i}|_{\clw},
\]
for all $w \in \D$ and $i = 1, \ldots, n$. For a more detailed
discussion on the analytic functions $\{\Phi_i\}_{i=1}^n$ on $\D$ we
refer to Remarks \ref{rem-phi} and \ref{rem-thetaw}.

As an immediate application of the above theorem we obtain the
following (see Corollary \ref{cor-ueqv}): If $\cls \subseteq
H^2_{H_n}(\D)$ is a closed invariant subspace for $(M_z,
M_{\kappa_1}, \ldots, M_{\kappa_n})$, then the tuples $(M_z|_{\cls},
M_{\kappa_1}|_{\cls}, \ldots, M_{\kappa_n}|_{\cls})$ on $\cls$ and
$(M_z, M_{\Phi_1}, \ldots, M_{\Phi_n})$ on $H^2_{\clw}(\D)$ are
unitarily equivalent, where $\clw = \cls \ominus z \cls$ and
\[
\Phi_i(w) = P_{\clw}(I_{\cls} - w P_{\cls} M_z^*)^{-1}
M_{\kappa_i}|_{\clw},
\]
for all $w \in \D$ and $i = 1, \ldots, n$. Our approach also yields
a complete set of unitary invariants for invariant subspaces: The
$n$-tuples of commuting shifts $(M_{\Phi_1}, \ldots, M_{\Phi_n})$ on
$H^2_{\clw}(\D)$ is a complete set of unitary invariants for
invariant subspaces for $(M_z, M_{\kappa_1}, \ldots, M_{\kappa_n})$
on $H^2_{H_n}(\D)$ (see Theorem \ref{th-completeset} for more
details).

We also contribute to the classification problem of commuting tuples
of isometries on Hilbert spaces. It is well known that the structure
of $n$-tuples, $n > 1$, of commuting isometries on Hilbert spaces is
complicated. In fact, except for the Berger, Coburn and Lebow pairs
of commuting isometries \cite{BCL} and Agler and McCarthy pairs of
commuting isometries \cite{AM}, very little is known about the
detailed structure of $n$-tuples of commuting isometries. On the
other hand, $n$-tuples of commuting isometries play a central role
in multivariable operator theory and function theory. In Corollary
\ref{cor-ueqv}, as a byproduct of our analysis, we completely
classify $n$-tuples of commuting isometries of the form
$(M_z|_{\cls}, M_{\kappa_1}|_{\cls}, \ldots, M_{\kappa_n}|_{\cls})$
on $\cls$, where $\cls$ is a closed invariant subspace for $(M_z,
M_{\kappa_1}, \ldots, M_{\kappa_n})$ on $H^2_{\cle_n}(\D)$.

This paper is organized as follows. In Section 2 we give various
background definitions and results on the Hardy space over the unit
polydisc. In Section 3, we prove the central result of this paper -
representations of invariant subspaces of vector-valued Hardy spaces
over polydisc. In chapter 4 we study and analyze the model tuples of
commuting isometries. Section 5 complements the main results on
representations of invariant subspaces and deals with the uniqueness
part. In Section 6 we give some applications related to the main
theorems. The final section of this paper is devoted to an appendix
on a dimension inequality which is relevant to the present context
and of independent interest.

\newsection{Prerequisites}

We start by briefly recalling the relevant parts of the Hardy space
over the unit polydisc. Let $n \geq 1$, and let $\mathbb{D}^n$ be
the open unit polydisc in $\mathbb{C}^n$. The \textit{Hardy space}
$H^2(\mathbb{D}^n)$ over $\mathbb{D}^n$ is the Hilbert space of all
holomorphic functions $f$ on $\mathbb{D}^n$ such that
\[
\|f\|_{H^2(\mathbb{D}^n)} = \left(\sup_{0\leq r< 1}
\int_{\mathbb{T}^n}|f(r e^{i \theta_1}, \ldots, r e^{i
\theta_n})|^2~d {\theta} \right)^{\frac{1}{2}}< \infty,
\]
where $d {\theta}$ is the normalized Lebesgue measure on the torus
$\mathbb{T}^n$, the distinguished boundary of $\mathbb{D}^n$. It is
well known that $H^2(\D^n)$ is a reproducing kernel Hilbert space
corresponding to the Szeg\"{o} kernel $\mathbb{S}_n$ on $\D^n$,
where
\[
\mathbb{S}_n(\z, \w) = \prod_{i=1}^n (1 - z_i \bar{w}_i)^{-1} \quad
\quad (\z, \w \in \D^n).
\]
Clearly
\[
\mathbb{S}_n^{-1}(\z, \w) = \mathop{\sum}_{0 \leq |\bm{k}| \leq n}
(-1)^{|\bm{k}|} \z^{\bm{k}} \bar{\w}^{\bm{k}},
\]
where $|\bm{k}| = \mathop{\sum}_{i=1}^n k_i$ and $0 \leq k_i \leq 1$
for all $i = 1, \ldots, n$. Here we use the notation $\z$ for the
$n$-tuple $(z_1, \ldots, z_n)$ in $\mathbb{C}^n$. Also for any
multi-index $\bm{k} = (k_1, \ldots, k_n) \in \mathbb{Z}_+^n$ and $\z
\in \mathbb{C}^n$, we write $\z^{\bm{k}} = z_1^{k_1} \cdots
z_n^{k_n}$.

Let $\cle$ be a Hilbert space, and let $H^2_{\cle}(\D^n)$ denote the
$\cle$-valued Hardy space over $\D^n$. Then $H^2_{\cle}(\D^n)$ is
the $\cle$-valued reproducing kernel Hilbert space with the
$\clb(\cle)$-valued kernel function
\[
(\z, \w) \mapsto \mathbb{S}_n(\z, \w) I_{\cle} \quad \quad (\z, \w
\in \D^n).
\]
In the sequel, by virtue of the canonical unitary $U :
H^2_{\cle}(\D^n) \raro H^2(\D^n) \otimes \cle$ defined by
\[
U (\z^{\bm{k}} \eta) = \z^{\bm{k}} \otimes \eta \quad \quad (\bm{k}
\in \mathbb{Z}^n_+, \eta \in \cle),
\]
we shall often identify the vector valued Hardy space $H^2(\D^n)
\otimes \cle$ with $H^2_{\cle}(\D^n)$. Let $(M_{z_1}, \ldots,
M_{z_n})$ denote the $n$-tuple of multiplication operators on
$H^2_{\cle}(\D^n)$ by the coordinate functions $\{z_i\}_{i=1}^n$,
that is,
\[
(M_{z_i} f)(\w) = w_i f(\w),
\]
for all $f \in H^2_{\cle}(\D^n)$, $\w \in \D^n$ and $i = 1, \ldots,
n$. It is well known and easy to check that
\[
\|M_{z_i} f\| = \|f\|,
\]
and
\[
\|M_{z_i}^{*m} f\| \raro 0,
\]
as $m \raro \infty$ and for all $f \in H^2_{\cle}(\D^n)$, that is,
$M_{z_i}$ defines a shift (see the definition of shift below) on
$H^2_{\cle}(\D^n)$, $i = 1, \ldots, n$. If $n
> 1$, then it also follows easily that
\[
M_{z_i} M_{z_j} = M_{z_j} M_{z_i},
\]
and
\[
M_{z_i}^* M_{z_j} = M_{z_j} M_{z_i}^*,
\]
for all $1 \leq i < j \leq n$. Therefore, $(M_{z_1}, \ldots,
M_{z_n})$ is an $n$-tuple of \textit{doubly commuting} shifts on
$H^2_{\cle}(\D^n)$. Evidently the shift $M_{z_i}$ on
$H^2_{\cle}(\D^n)$ can be identified with $M_{z_i} \otimes I_{\cle}$
on $H^2(\D^n) \otimes \cle$. This canonical identification will be
used throughout the rest of the paper.

\NI We recall that a closed subspace $\cls \subseteq
H^2_{\cle}(\D^n)$ is called an \textit{invariant subspace} for
$(M_{z_1}, \ldots, M_{z_n})$ on $H^2_{\cle}(\D^n)$ if
\[
z_i \cls \subseteq \cls,
\]
for all $i = 1, \ldots, n$.

Now we review and adapt some standard techniques for shift operators
which are useful for our purposes (see \cite{MSS} for more details).
Let $\clh$ be a Hilbert space. Let $V$ be an isometry on $\clh$,
that is, $\|V f\| = \|f\|$ for all $f \in \clh$. Then $V$ is said to
be a \textit{shift} \cite{H} if there is no non trivial reducing
subspace of $\clh$ on which $V$ is unitary. Equivalently, an
isometry $V$ on $\clh$ is a shift if $V$ is pure, that is, $\|V^{*m}
f\| \raro 0$ for all $f \in \clh$. Now if $V$ is a shift on $\clh$,
then
\[
\clh = \mathop{\oplus}_{m=0}^\infty V^m \clw,
\]
where $\clw$ is the \textit{wandering subspace} \cite{H} for $V$,
that is, $\clw = \ker V^* = \clh \ominus V \clh$. Hence the natural
map $\Pi_V : \clh \raro H^2_{\clw}(\D)$ defined by
\[
\Pi_V (V^m \eta) = z^m \eta,
\]
for all $m \geq 0$ and $\eta\in \clw$, is a unitary operator and
\[
\Pi_V V = M_z \Pi_V.
\]
Following Wold \cite{W} and von Neumann \cite{VN}, we call $\Pi_V$
the \textit{Wold-von Neumann decomposition} of the shift $V$.

We will need the following representation theorem for commutators of
shifts proved in \cite{MSS}. Here we only sketch this proof and
refer the reader to \cite{MSS} for more details.

\begin{thm}\label{thm-commutator}
Let $\clh$ be a Hilbert space. Let $V$ be a shift on $\clh$ and $C$
be a bounded operator on $\clh$. Let $\Pi_V$ be the Wold-von Neumann
decomposition of $V$, $M = \Pi_V C \Pi^{*}_V$, and let
\[
\Theta(z)= P_{\clw}(I_{\clh} -z V^{*})^{-1}C\mid_{\clw} \quad \quad
(z \in \D).
\]
Then $C V = VC$ if and only if $\Theta \in
H^\infty_{\clb(\clw)}(\D)$ and
\[
M = M_{\Theta}.
\]
\end{thm}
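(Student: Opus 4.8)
The plan is to analyze the unitary $\Pi_V$ and the operator $M = \Pi_V C \Pi_V^*$ on $H^2_{\clw}(\D)$, and to recognize $M$ as a multiplication operator precisely when $C$ commutes with $V$. First I would establish the easy direction: suppose $M = M_\Theta$ for some $\Theta \in H^\infty_{\clb(\clw)}(\D)$. Since $\Pi_V V \Pi_V^* = M_z$ and $M_z$ commutes with every multiplier $M_\Theta$, conjugating back gives $C V = \Pi_V^* M_\Theta M_z \Pi_V = \Pi_V^* M_z M_\Theta \Pi_V = V C$. So the content is in the forward direction and in the identification of the symbol with the stated formula.

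For the forward direction, assume $C V = V C$. The key computation is to evaluate $M$ against the reproducing kernel of $H^2_{\clw}(\D)$, or equivalently to compute $(M f)(z)$ for $f \in H^2_{\clw}(\D)$. Writing $\Pi_V^* f = \sum_{m \geq 0} V^m \eta_m$ where $f(z) = \sum_{m\geq 0} z^m \eta_m$, one applies $C$, uses $C V^m = V^m C$ is \emph{not} available — only $CV = VC$ — so instead I would compute the "first column" of $M$: the action of $M$ on the constant functions $\clw \subseteq H^2_{\clw}(\D)$, i.e. $M|_{\clw} = \Pi_V C|_{\clw}$. For $\eta \in \clw$, decompose $C\eta = \sum_{m\geq 0} V^m \xi_m$ with $\xi_m \in \clw$; then $\Pi_V C \eta = \sum_{m \geq 0} z^m \xi_m$, and $\xi_m = P_{\clw} V^{*m} C \eta$. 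Hence the would-be symbol is $\Theta(z)\eta = \sum_{m\geq 0} z^m P_{\clw} V^{*m} C\eta = P_{\clw}(I - zV^*)^{-1} C\eta$, which is exactly the stated $\Theta(z)$ (the Neumann series converging for $|z| < 1$ since $\|V^*\| \le 1$, and the constant term $P_{\clw}C|_{\clw}$ being bounded). It remains to show $M = M_\Theta$ on all of $H^2_{\clw}(\D)$ and that $\Theta$ is a bounded multiplier. Here the commutation $CV = VC$ enters decisively: $M M_z = \Pi_V C \Pi_V^* M_z = \Pi_V C V \Pi_V^* = \Pi_V V C \Pi_V^* = M_z M$, so $M$ commutes with $M_z$. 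A bounded operator on $H^2_{\clw}(\D)$ commuting with $M_z$ is automatically an analytic Toeplitz (multiplication) operator with some symbol in $H^\infty_{\clb(\clw)}(\D)$; its symbol is recovered from its action on constants, which we just computed to be $\Theta$. Therefore $M = M_\Theta$ and $\Theta \in H^\infty_{\clb(\clw)}(\D)$.

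The main obstacle is the step "a bounded operator commuting with $M_z$ on a vector-valued Hardy space is a multiplication operator by an $H^\infty_{\clb(\clw)}(\D)$ function, and its symbol equals the closed-form expression above." This is the standard vector-valued commutant lifting / Toeplitz characterization, but one must be careful that the formal power series $\Theta(z) = \sum z^m P_\clw V^{*m} C|_\clw$ indeed defines a \emph{bounded} $\clb(\clw)$-valued holomorphic function on $\D$ (not merely a formal series), which follows because the associated Toeplitz operator $M$ is bounded with $\|M_\Theta\|_{\text{mult}} = \|M\| = \|P_{\text{ran}\,\Pi_V} C P_{\text{ran}\,\Pi_V}\| \le \|C\|$; and that the identity $M = M_\Theta$ holds on the dense set of polynomials, extending by continuity. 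Since the paper says it only sketches the proof and refers to \cite{MSS}, I would present exactly this outline — easy direction by conjugation, forward direction via $M M_z = M_z M$ plus the computation of the symbol on constants — and defer the routine boundedness and density arguments to the reference.
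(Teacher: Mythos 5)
Your proposal is correct and takes essentially the same route as the paper: deduce $M M_z = M_z M$ from $CV = VC$, invoke the standard characterization of operators commuting with $M_z$ on $H^2_{\clw}(\D)$ to get $M = M_{\Theta}$ with $\Theta \in H^\infty_{\clb(\clw)}(\D)$, and then identify the symbol on the constants $\clw$ via the Wold-type identity $C\eta = \sum_{m \geq 0} V^m P_{\clw} V^{*m} C \eta$, with the converse direction handled by conjugating with $\Pi_V$. (Your parenthetical worry is unnecessary --- $CV = VC$ does give $C V^m = V^m C$ by induction --- but your argument does not use it, so nothing is affected.)
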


\noindent\textit{ Sketch of proof:} For the necessary part, let $CV
= VC$. Then $M M_z = M_z M$, and so
\[
M = M_{\Theta},
\]
for some (unique) bounded analytic function $\Theta \in
H^{\infty}_{\clb(\clw)}(\D)$ \cite{NF}. Let $z \in \D$ and $\eta \in
\clw$. Since $\Theta(z) \eta  = (M_{\Theta} \eta)(z)$, it follows
that
\[
\begin{split}
\Theta(z) \eta & = (\Pi_V C \Pi_V^* \eta)(z)
\\
& = (\Pi_V C \eta)(z),
\end{split}
\]
as $\Pi_V^* \eta = \eta$. Now a simple computation shows that (cf.
\cite{MSS})
\[
I_{\clh} = \sum_{m=0}^\infty V^m P_{\clw} V^{*m},
\]
in the strong operator topology, from which it follows that
\[
C \eta = \sum_{m=0}^{\infty} V^m P_{\clw} V^{*m} C \eta,
\]
and so
\[
\begin{split}
\Theta(z) \eta & = (\Pi_V (\sum_{m=0}^{\infty} V^m P_{\clw} V^{*m} C
\eta))(z)
\\
& = (\sum_{m=0}^{\infty} M_z^m (P_{\clw} V^{*m} C \eta))(z).
\end{split}
\]
 Using the fact that $P_{\clw} V^{*m} C \eta \in \clw$ for all $m \geq 0$, from here
we get
\[
\begin{split}
\Theta(z) \eta & = \sum_{m=0}^{\infty} z^m (P_{\clw} V^{*m} C \eta)
\\
& = P_{\clw} (I_{\clh} - z V^*)^{-1} C \eta.
\end{split}
\]
The sufficient part easily follows from the fact that $\Pi_V^* M
\Pi_V = C$. This proves the theorem. \qed

As usual, here $H^\infty_{\clb(\clw)}(\D)$ denotes the Banach
algebra of all $\clb(\cle)$-valued bounded analytic functions on the
open unit disc $\D$ (cf. \cite{NF}).

\newsection{Main results}

With the above preparation, we now turn to the representations of
joint invariant subspaces of vector-valued Hardy spaces. Let $n$ be
a positive integer. Let $\cle$ be a Hilbert space, and consider the
vector-valued Hardy space $H^2_{\cle}(\D^{n+1})$. Our strategy here
is to identify $M_{z_1}$ on $H^2_{\cle}(\D^{n+1})$ with the
multiplication operator $M_z$ on the $H^2_{\cle}(\D^n)$-valued Hardy
space on the disc $\D$. Then we show that under this identification,
the remaining operators $\{M_{z_2}, \ldots, M_{z_{n+1}}\}$ on
$H^2_{\cle}(\D^{n+1})$ can be represented as the multiplication
operators by $n$ simple and constant $\clb(H^2_{\cle}(\D^n))$-valued
functions on $\D$. For this we need a few more notations.

For each Hilbert space $\cll$, for the sake of notational ease,
define
\[
\cll_n = H^2(\D^n) \otimes \cll.
\]
When $\cll = \mathbb{C}$, we simply write $\cll_n = H_n$, that is,
\[
H_n = H^2(\D^n).
\]
Also, for each $i = 1, \ldots, n$, we define
\[
\kappa_{\cll, i} (w) = M_{z_i} \otimes I_{\cll},
\]
for all $w \in \D$, and write
\[
\kappa_{\cll, i} = \kappa_i,
\]
when $\cll$ is clear from the context. It is evident that $\kappa_i
\in H^\infty_{\clb(\cll_n)}(\D)$ is a constant function and
$M_{\kappa_i}$ on $H^2_{\cll_n}(\D)$, defined by
\[
M_{\kappa_i} f = \kappa_i f \quad \quad (f \in H^2_{\cll_n}(\D)),
\]
is a shift on $H^2_{\cll_n}(\D)$ for all $i = 1, \ldots, n$.

Now we return to the invariant subspaces of $H^2_{\cle}(\D^{n+1})$.
First we identify $H^2_{\cle}(\D^{n+1})$ with $H^2(\D) \otimes
\cle_n$ by the natural unitary map $\hat{U} : H^2_{\cle}(\D^{n+1})
\raro H^2(\D) \otimes \cle_n$ defined by
\[
\hat{U}(z_1^{k_1} z_2^{k_2} \cdots z_{n+1}^{k_{n+1}} \eta) = z^{k_1}
\otimes (z_1^{k_2} \cdots z_{n}^{k_{n+1}} \eta),
\]
for all $k_1, \ldots, k_{n+1} \geq 0$ and $\eta \in \cle$. Then it
is clear that
\[
\hat{U} M_{z_1} = (M_z \otimes I_{\cle_n}) \hat{U}.
\]
Moreover, a simple computation shows that
\[
\hat{U} M_{z_{1 + i}} = (I_{H^2(\D)} \otimes K_i) \hat{U},
\]
where $K_i$ is the multiplicational operator $M_{z_i}$ on $\cle_n$,
that is
\[
K_i = M_{z_i},
\]
for all $i = 1, \ldots, n$. Therefore, the tuples $(M_{z_1},
M_{z_2}, \ldots, M_{z_{n+1}})$ on $H^2_{\cle}(\D^{n+1})$ and $(M_z
\otimes I_{\cle_n}, I_{H^2(\D)} \otimes K_1, \ldots, I_{H^2(\D)}
\otimes K_n)$ on $H^2(\D) \otimes \cle_n$ are unitarily equivalent.
We further identify $H^2(\D) \otimes \cle_n$ with the
$\cle_n$-valued Hardy space $H^2_{\cle_n}(\D)$ by the canonical
unitary map $\tilde{U} : H^2(\D) \otimes \cle_n \raro
H^2_{\cle_n}(\D)$ defined by
\[
\tilde{U}(z^k \otimes \eta) = z^k \eta,
\]
for all $k \geq 0$ and $\eta \in \cle_n$. Clearly
\[
\tilde{U} (M_z \otimes I_{\cle_n}) = M_z \tilde{U}.
\]
Now for each $i = 1, \ldots, n$, define the constant
$\clb(\cle_n)$-valued (analytic) function on $\D$ by
\[
\kappa_i(z) = K_i,
\]
for all $z \in \D$. Then $\kappa_i \in H^\infty_{\clb(\cle_n)}(\D)$,
and the multiplication operator $M_{\kappa_i}$ on
$H^2_{\cle_n}(\D)$, defined by
\[
(M_{\kappa_i} (z^m \eta))(w) = w^m (K_i \eta),
\]
for all $m \geq 0$, $\eta \in \cle_n$ and $w \in \D$, is a shift on
$H^2_{\cle_n}(\D)$. It is now easy to see that
\[
\tilde{U} (I_{H^2(\D)} \otimes K_i) = M_{\kappa_i} \tilde{U}.
\]
for all $i = 1, \ldots, n$. Finally, by setting
\[
U = \tilde{U} \hat{U},
\]
it follows that $U : H^2_{\cle}(\D^{n+1}) \raro H^2_{\cle_n}(\D)$ is
a unitary operator and
\[
U M_{z_1} = M_z U,
\]
and
\[
U M_{z_{1 + i}} = M_{\kappa_i} {U},
\]
for all $i = 1, \ldots, n$. This proves the vector-valued version of
the first half of the statement of Theorem \ref{thm-main}:

\begin{thm}\label{thm-1}
Let $\cle$ be a Hilbert space. Then $(M_{z_1}, M_{z_2} \ldots,
M_{z_{n+1}})$ on $H^2_{\cle}(\D^{n+1})$ and $(M_z, M_{\kappa_1},
\ldots, M_{\kappa_n})$ on $H^2_{\cle_n}(\D)$ are unitarily
equivalent, where $\kappa_i \in H^\infty_{\clb(\cle_n)}(\D)$ is the
constant function
\[
\kappa_i(w) = M_{z_i} \in \clb(\cle_n),
\]
for all $w \in \D$ and $i = 1, \ldots, n$.
\end{thm}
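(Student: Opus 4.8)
The plan is to verify Theorem \ref{thm-1} by explicitly constructing the unitary $U : H^2_{\cle}(\D^{n+1}) \raro H^2_{\cle_n}(\D)$ as a composition of two canonical unitaries, exactly as set up in the text preceding the statement, and then checking the intertwining relations on the orthonormal-basis-type spanning vectors $z_1^{k_1} \cdots z_{n+1}^{k_{n+1}} \eta$. First I would note that $\{z_1^{k_1} \cdots z_{n+1}^{k_{n+1}} \eta : k_j \geq 0,\ \eta \in \cle\}$ has dense linear span in $H^2_{\cle}(\D^{n+1})$, so it suffices to define the maps on these vectors, check norm preservation on finite linear combinations (which is immediate since distinct multi-indices give orthogonal monomials and the map only permutes/regroups the indices), and verify surjectivity (the image clearly contains a spanning set of the target). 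This handles the claim that $\hat U$, $\tilde U$, and hence $U = \tilde U \hat U$, are unitary.

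Next I would establish the three intertwining identities. For $\hat U M_{z_1} = (M_z \otimes I_{\cle_n})\hat U$: apply both sides to $z_1^{k_1}\cdots z_{n+1}^{k_{n+1}}\eta$; the left side gives $\hat U(z_1^{k_1+1} z_2^{k_2}\cdots z_{n+1}^{k_{n+1}}\eta) = z^{k_1+1}\otimes(z_1^{k_2}\cdots z_n^{k_{n+1}}\eta)$, and the right side gives $(M_z\otimes I_{\cle_n})(z^{k_1}\otimes(z_1^{k_2}\cdots z_n^{k_{n+1}}\eta))$, which is the same. For $\hat U M_{z_{1+i}} = (I_{H^2(\D)}\otimes K_i)\hat U$ with $K_i = M_{z_i}$ on $\cle_n = H^2(\D^n)\otimes\cle$: again a direct computation on the spanning monomials, noting that multiplication by $z_{1+i}$ in $H^2_{\cle}(\D^{n+1})$ raises $k_{1+i}$ by one, which under $\hat U$ corresponds to raising the power of $z_i$ inside the $\cle_n$-slot, i.e. applying $K_i$ to the second tensor factor. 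For $\tilde U(M_z\otimes I_{\cle_n}) = M_z\tilde U$ and $\tilde U(I_{H^2(\D)}\otimes K_i) = M_{\kappa_i}\tilde U$: these are immediate from the definitions $\tilde U(z^k\otimes\xi) = z^k\xi$ and $(M_{\kappa_i}(z^m\eta))(w) = w^m(K_i\eta)$, since $\kappa_i$ is the constant function $w\mapsto K_i = M_{z_i}\in\clb(\cle_n)$. Composing, $U M_{z_1} = \tilde U\hat U M_{z_1} = \tilde U(M_z\otimes I_{\cle_n})\hat U = M_z\tilde U\hat U = M_z U$, and similarly $U M_{z_{1+i}} = M_{\kappa_i}U$ for $i = 1,\ldots,n$, which is exactly the assertion.

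Finally I would record that $\kappa_i \in H^\infty_{\clb(\cle_n)}(\D)$ because a constant operator-valued function is trivially bounded and analytic, with $\|\kappa_i\|_\infty = \|M_{z_i}\|_{\clb(\cle_n)} = 1$, and that $M_{\kappa_i}$ is a shift on $H^2_{\cle_n}(\D)$ (it is unitarily equivalent, via $\tilde U$, to $I_{H^2(\D)}\otimes K_i$ with $K_i$ a shift on $\cle_n$, as already noted in the text; alternatively $M_{\kappa_i}$ is an isometry and $M_{\kappa_i}^{*m}\to 0$ strongly since $K_i^{*m}\to 0$ strongly). I do not anticipate a genuine obstacle here: the entire argument is bookkeeping with tensor-factor identifications, and the text has essentially already carried it out. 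The only point requiring a small amount of care is making sure the index shuffling in the definition of $\hat U$ — sending $z_1^{k_1}z_2^{k_2}\cdots z_{n+1}^{k_{n+1}}\eta$ to $z^{k_1}\otimes(z_1^{k_2}\cdots z_n^{k_{n+1}}\eta)$, so that the variable $z_{1+i}$ upstairs matches the variable $z_i$ in the $\cle_n$-slot — is applied consistently when computing the action of $M_{z_{1+i}}$; once that matching is fixed, all three intertwiners follow by the same one-line monomial computation.
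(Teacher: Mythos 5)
Your proposal is correct and follows essentially the same route as the paper: the paper also defines $\hat{U}(z_1^{k_1}\cdots z_{n+1}^{k_{n+1}}\eta) = z^{k_1}\otimes(z_1^{k_2}\cdots z_n^{k_{n+1}}\eta)$ and $\tilde{U}(z^k\otimes\eta)=z^k\eta$, verifies the same three intertwining relations on monomials, and sets $U=\tilde{U}\hat{U}$. Your added remarks on density, norm preservation, and the shift property of $M_{\kappa_i}$ are just the routine details the paper leaves implicit.
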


Now we proceed to prove the remaining half of Theorem \ref{thm-main}
in the vector-valued Hardy space setting. Let $\cls \subseteq
H^2_{\cle_n}(\D)$ be a closed invariant subspace for $(M_z,
M_{\kappa_1}, \ldots, M_{\kappa_n})$ on $H^2_{\cle_n}(\D)$. Set
\[
V = M_z|_{\cls},
\]
and
\[
V_i = M_{\kappa_i}|_{\cls},
\]
for all $i = 1, \ldots, n$. Clearly, $(V, V_1, \ldots, V_n)$ is a
commuting tuple of isometries on $\cls$. Note that if $f \in \cls$,
then
\[
\begin{split}
\|V_i^{*m} f\|_{\cls} & = \|P_{\cls} M_{\kappa_i}^{*m} f\||_{\cls}
\\
& \leq \|M_{\kappa_i}^{*m} f\||_{H^2_{\cle_n}(\D)},
\end{split}
\]
that is, $V_i$, $i = 1, \ldots, n$, is a shift on $\cls$, and
similarly $V$ is also a shift on $\cls$. Let $\clw = \cls \ominus V
\cls$ denote the wandering subspace for $V$, that is
\[
\begin{split}
\clw & = \ker V^*
\\
& = \ker P_{\cls} M_z^*,
\end{split}
\]
and let $\Pi_V : \cls \raro H^2_{\clw}(\D)$ be the Wold-von Neumann
decomposition of $V$ on $\cls$ (see Section 2). Then $\Pi_V$ is a
unitary operator and
\[
\Pi_V V = M_z \Pi_V.
\]
Since
\[
V V_i= V_i V,
\]
applying Theorem \ref{thm-commutator} to $V_i$, we obtain
\[
\Pi_V V_i = M_{\Phi_i} \Pi_V,
\]
where
\[
\Phi_i(w) = P_{\clw}(I_{\cls} - w V^*)^{-1} V_i|_{\clw},
\]
for all $w \in \D$, $\Phi_i \in H^\infty_{\clb(\clw)}(\D)$,
$M_{\Phi_i}$ is a shift on $H^2_{\clw}(\D)$ since $V_i$ is a shift
on $\cls$ and $i = 1, \ldots, n$. Now since $\Pi_V$ is unitary, we
obtain that
\[
\Pi_V^* M_z = V \Pi_V^*,
\]
and
\[
\Pi_V^* V_i = M_{\Phi_i} \Pi_V^*,
\]
for all $i = 1, \ldots, n$. Finally, if we let $i_{\cls}$ denote the
inclusion map $i_{\cls} : \cls \hookrightarrow H^2_{\cle_n}(\D)$,
then $\Pi_{\cls} : H^2_{\clw}(\D) \raro H^2_{\cle_n}(\D)$ is an
isometry, where
\[
\Pi_{\cls} = i_{\cls} \circ \Pi_V^*.
\]
Clearly $\Pi_{\cls} \Pi_{\cls}^* = i_{\cls} i_{\cls}^*$. This
implies that
\[
\mbox{ran~} \Pi_{\cls} = \mbox{ran~} i_{\cls},
\]
and so
\[
\mbox{ran~} \Pi_{\cls} =\cls.
\]
Now, using $i_{\cls} V = M_z i_{\cls}$ and $i_{\cls} V_j =
M_{\kappa_j} i_{\cls}$, we have
\[
\Pi_{\cls} M_z = M_z \Pi_{\cls},
\]
and
\[
\Pi_{\cls} M_{\Phi_i} = M_{\kappa_i} \Pi_{\cls},
\]
for all $i = 1, \ldots, n$. From the first equality it follows that
there exists an inner function $\Theta \in H^\infty_{\clb(\clw,
\cle_n)}(\D)$ such that
\[
\Pi_{\cls} = M_{\Theta}.
\]
This and the second equality implies that
\[
\kappa_i \Theta = \Theta \Phi_i,
\]
for all $i = 1, \ldots, n$. Moreover, $\text{ran~} \Pi_{\cls} =
\cls$ yields
\[
\cls = \Theta H^2_{\clw}(\D).
\]
To prove that $(M_{\Phi_1}, \ldots, M_{\Phi_n})$ is a commuting
tuple, observe that
\[
\begin{split}
M_{\Phi_i} M_{\Phi_j} \Pi_V & = M_{\Phi_i} \Pi_V V_j
\\
& = \Pi_V V_i V_j
\\
& = \Pi_V V_j V_i
\\
& = M_{\Phi_j} M_{\Phi_i} \Pi_V,
\end{split}
\]
and so
\[
M_{\Phi_i} M_{\Phi_j} = M_{\Phi_j} M_{\Phi_i},
\]
for all $i, j = 1, \ldots, n$. This proves the last part of Theorem
\ref{thm-main} in the vector-valued Hardy space setting:

\begin{thm}\label{thm-2}
Let $\cle$ be a Hilbert space. Let $\cls \subseteq H^2_{\cle_n}(\D)$
be a closed subspace, $\clw = \cls \ominus z \cls$, and let
\[
\Phi_i(w) = P_{\clw}(I_{\cls} - w P_{\cls} M_z^*)^{-1}
M_{\kappa_i}|_{\clw},
\]
for all $w \in \D$ and $i = 1, \ldots, n$. Then $\cls$ is invariant
for $(M_z, M_{\kappa_1}, \ldots, M_{\kappa_n})$ if and only if
$(M_{\Phi_1}, \ldots,M_{\Phi_n})$ is an $n$-tuple of commuting
shifts on $H^2_{\clw}(\D)$ and there exists an inner function
$\Theta \in H^\infty_{\clb(\clw, \cle_n)}(\D)$ such that
\[
\cls = \Theta H^2_{\clw}(\D),
\]
and
\[
\kappa_i \Theta = \Theta \Phi_i,
\]
for all $i = 1, \ldots, n$.
\end{thm}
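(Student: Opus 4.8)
The plan is to prove the two implications of the asserted equivalence separately. The forward implication is, in effect, exactly the chain of constructions carried out in the discussion immediately preceding the statement, so the work there is to package it cleanly; the converse is a short direct computation once one has $\cls = \Theta H^2_{\clw}(\D)$ together with the intertwining identities.

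For the forward implication, assume $\cls$ is invariant for $(M_z, M_{\kappa_1}, \ldots, M_{\kappa_n})$ and put $V = M_z|_{\cls}$ and $V_i = M_{\kappa_i}|_{\cls}$, $i = 1, \ldots, n$. First I would check that $V, V_1, \ldots, V_n$ are commuting \emph{shifts} on $\cls$: isometry is immediate from invariance, and purity of each $V_i$ follows from $\|V_i^{*m} f\|_{\cls} \le \|M_{\kappa_i}^{*m} f\|_{H^2_{\cle_n}(\D)} \to 0$, and similarly for $V$. Let $\Pi_V : \cls \raro H^2_{\clw}(\D)$, with $\clw = \cls \ominus z\cls$, be the Wold--von Neumann decomposition of $V$. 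Next, applying Theorem \ref{thm-commutator} with the shift $V$ and the commuting bounded operator $C = V_i$ yields $\Pi_V V_i \Pi_V^* = M_{\Phi_i}$ with $\Phi_i(w) = P_{\clw}(I_{\cls} - wV^*)^{-1} V_i|_{\clw} \in H^\infty_{\clb(\clw)}(\D)$; substituting $V^* = P_{\cls} M_z^*|_{\cls}$ and $V_i|_{\clw} = M_{\kappa_i}|_{\clw}$ (legitimate since $\cls$ is invariant) identifies this $\Phi_i$ with the one in the statement. Since $\Pi_V$ is unitary, $M_{\Phi_i}$ is unitarily equivalent to the shift $V_i$, hence is a shift on $H^2_{\clw}(\D)$; and $M_{\Phi_i} M_{\Phi_j} \Pi_V = \Pi_V V_i V_j = \Pi_V V_j V_i = M_{\Phi_j} M_{\Phi_i} \Pi_V$ gives commutativity. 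Finally I would form the isometry $\Pi_{\cls} = i_{\cls} \circ \Pi_V^* : H^2_{\clw}(\D) \raro H^2_{\cle_n}(\D)$; from $\Pi_{\cls} \Pi_{\cls}^* = i_{\cls} i_{\cls}^*$ one gets that the range of $\Pi_{\cls}$ is $\cls$, and from $i_{\cls} V = M_z i_{\cls}$ and $\Pi_V^* M_z = V \Pi_V^*$ one gets $\Pi_{\cls} M_z = M_z \Pi_{\cls}$. The vector-valued Beurling--Lax--Halmos theorem (cf.\ \cite{NF}) then produces an inner $\Theta \in H^\infty_{\clb(\clw, \cle_n)}(\D)$ with $\Pi_{\cls} = M_{\Theta}$, whence $\cls = \Theta H^2_{\clw}(\D)$; and $\Pi_{\cls} M_{\Phi_i} = M_{\kappa_i} \Pi_{\cls}$ (which follows from $\Pi_V^* M_{\Phi_i} = V_i \Pi_V^*$ and $i_{\cls} V_i = M_{\kappa_i} i_{\cls}$) becomes $M_{\Theta} M_{\Phi_i} = M_{\kappa_i} M_{\Theta}$, i.e.\ $\kappa_i \Theta = \Theta \Phi_i$.

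For the converse, assume there is an inner $\Theta \in H^\infty_{\clb(\clw, \cle_n)}(\D)$ with $\cls = \Theta H^2_{\clw}(\D)$ and $\kappa_i \Theta = \Theta \Phi_i$ for all $i$, where $M_{\Phi_i}$ is (in particular) a bounded operator on $H^2_{\clw}(\D)$. Then for every $g \in H^2_{\clw}(\D)$ one has $z (\Theta g) = \Theta (z g) \in \Theta H^2_{\clw}(\D)$ and $\kappa_i (\Theta g) = (\kappa_i \Theta) g = (\Theta \Phi_i) g = \Theta (\Phi_i g) \in \Theta H^2_{\clw}(\D)$, using that $zg$ and $\Phi_i g = M_{\Phi_i} g$ lie in $H^2_{\clw}(\D)$. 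Hence $M_z \cls \subseteq \cls$ and $M_{\kappa_i} \cls \subseteq \cls$, i.e.\ $\cls$ is invariant for $(M_z, M_{\kappa_1}, \ldots, M_{\kappa_n})$.

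The only genuinely substantive step is in the forward direction: showing that $\Pi_{\cls}$ is an isometry onto $\cls$ intertwining the two copies of $M_z$, so that the $\clb(\clw, \cle_n)$-valued Beurling--Lax--Halmos theorem applies and supplies the inner multiplier $\Theta$. Everything else is either a direct appeal to the already-proved Theorem \ref{thm-commutator} or routine manipulation of multiplication operators; a minor point to be careful about is matching the abstract multiplier furnished by Theorem \ref{thm-commutator} with the explicit formula for $\Phi_i$ in the statement, which amounts only to the substitution $V^* = P_{\cls} M_z^*|_{\cls}$.
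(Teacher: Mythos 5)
Your proposal is correct and follows essentially the same route as the paper: restrict to get the commuting shifts $V, V_1,\ldots,V_n$ on $\cls$, apply Theorem \ref{thm-commutator} via the Wold--von Neumann unitary $\Pi_V$ to obtain the $\Phi_i$, and use $\Pi_{\cls}=i_{\cls}\circ\Pi_V^*$ with the Beurling--Lax--Halmos theorem to produce $\Theta$ and the relations $\kappa_i\Theta=\Theta\Phi_i$. The only difference is that you spell out the (easy) converse, which the paper leaves implicit; that is a harmless addition.
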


A few remarks are in order.

\begin{rem}\label{rem-phi}
Note that since $\|w P_{\cls} M_z^*\| < 1$ for all $w \in \D$, the
$\clb(\clw)$-valued function $\Phi_i$, as defined in the above
theorem, is analytic on $\D$. Here the boundedness condition (or the
shift condition) on $M_{\Phi_i}$ on $H^2_{\clw}(\D)$ assures that
$\Phi_i \in H^\infty_{\clb(\clw)}(\D)$ for all $i = 1, \ldots, n$.
\end{rem}

\begin{rem}\label{rem-Beur}
Clearly, one obvious necessary condition for a closed subspace
$\cls$ of $H^2_{\cle_n}(\D)$ to be invariant for $(M_z,
M_{\kappa_1}, \ldots, M_{\kappa_n})$ is that $\cls$ is invariant for
$M_z$, and, consequently
\[
\cls = \Theta H^2_{\clw}(\D),
\]
is the classical Beurling, Lax and Halmos representation of $\cls$,
where $\clw = \cls \ominus z \cls$ is the wandering subspace for
$M_z|_{\cls}$ and $\Theta \in H^\infty_{\clb(\clw, \cle_n)}(\D)$ is
the (unique up to a unitary constant right factor; see Section 5)
Beurling, Lax and Halmos inner function. Moreover, since $\kappa_i
\cls \subseteq \cls$, another condition which is evidently necessary
(by Douglas's range inclusion theorem) is that
\[
\kappa_i \Theta = \Theta \Gamma_i,
\]
for some $\Gamma_i \in \clb(H^2_{\clw}(\D))$, $i = 1, \ldots, n$. In
the above theorem, we prove that $\Gamma_i$ is explicit, that is
\[
 \Gamma_i = \Phi_i \in H^\infty_{\clb(\clw)}(\D),
\]
for all $i = 1, \ldots, n$, and $(\Gamma_1, \ldots, \Gamma_n)$ is an
$n$-tuple of commuting shifts on $H^2_{\clw}(\D)$. This is probably
the most non-trivial part of our treatment to the invariant subspace
problem in the present setting.
\end{rem}

\begin{rem}\label{rem-thetaw}
Let $\cle$ be a Hilbert space, and let $\cls \subseteq
H^2_{\cle_n}(\D)$ be a closed invariant subspace for $(M_z,
M_{\kappa_1}, \ldots, M_{\kappa_n})$ on $H^2_{\cle_n}(\D)$. Let
$\clw$, $\Theta$ and $\{\Phi_i\}_{i=1}^n \subseteq
H^\infty_{\clb(\clw, \cle_n)}(\D)$ be as in Theorem \ref{thm-2}. Now
it follows from $P_{\cls} = M_{\Theta} M_{\Theta}^*$ that $P_{\cls}
M_z^{*m} = M_{\Theta} M_z^{*m} M_{\Theta}^*$ for all $m \geq 0$.
Hence the equality
\[
(I_{\cls} - w P_{\cls} M_z^*)^{-1} = \sum_{m=0}^\infty w^m P_{\cls}
M_z^{*m},
\]
yields
\[
(I_{\cls} - w P_{\cls} M_z^*)^{-1} = M_{\Theta} (I_{H^2_{\clw}(\D)}
- w M_z^*)^{-1} M_{\Theta}^*,
\]
so that
\[
\Phi_i(w) = P_{\clw} M_{\Theta} (I_{H^2_{\clw}(\D)} - w M_z^*)^{-1}
M_{\Theta}^* M_{\kappa_i}|_{\clw},
\]
for all $w \in \D$ and $i = 1, \ldots, n$.
\end{rem}

A well known consequence of the Beurling, Lax and Halmos theorem
(cf. page 239, Foias and Frazho \cite{FF}) implies that a closed
subspace $\cls \subseteq H^2_{\cle}(\D)$ is invariant for $M_z$ if
and only if $\cls \cong H^2_{\clf}(\D)$ for some Hilbert space
$\clf$ with
\[
\text{dim~} \clf \leq \text{dim~} \cle.
\]
More specifically, if $\cls$ is a closed invariant subspace of
$H^2_{\cle}(\D)$ and if  $\clw = \cls \ominus z \cls$, then the pure
isometry $M_z|_{\cls}$ on $\cls$ and $M_z$ on $H^2_{\clw}(\D)$ are
unitarily equivalent, and $\text{dim~} \clw \leq \text{dim~} \cle$.
The above theorem sets the stage for a similar result.

\begin{cor}\label{cor-ueqv}
Let $\cle$ be a Hilbert space, and let $\cls \subseteq
H^2_{\cle_n}(\D)$ be a closed invariant subspace for $(M_z,
M_{\kappa_1}, \ldots, M_{\kappa_n})$ on $H^2_{\cle_n}(\D)$. Let
$\clw = \cls \ominus z \cls$, and
\[
\Phi_i(w) = P_{\clw}(I_{\cls} - w P_{\cls} M_z^*)^{-1}
M_{\kappa_i}|_{\clw},
\]
for all $w \in \D$ and $i = 1, \ldots, n$. Then $(M_z|_{\cls},
M_{\kappa_1}|_{\cls}, \ldots, M_{\kappa_n}|_{\cls})$ on $\cls$ and
$(M_z, M_{\Phi_1}, \ldots, M_{\Phi_n})$ on $H^2_{\clw}(\D)$ are
unitarily equivalent.
\end{cor}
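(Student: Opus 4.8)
The plan is to extract the unitary equivalence directly from the machinery already assembled in the proof of Theorem~\ref{thm-2}. Recall that there we built the Wold--von Neumann decomposition $\Pi_V : \cls \raro H^2_{\clw}(\D)$ of the shift $V = M_z|_{\cls}$, which is by construction a unitary operator intertwining $V$ with $M_z$, i.e. $\Pi_V V = M_z \Pi_V$. The only additional input needed is that this same $\Pi_V$ also intertwines each $V_i = M_{\kappa_i}|_{\cls}$ with $M_{\Phi_i}$, and this is exactly the identity $\Pi_V V_i = M_{\Phi_i} \Pi_V$ established in the proof of Theorem~\ref{thm-2} by applying the commutator representation Theorem~\ref{thm-commutator} to the pair $(V, V_i)$ (using $V V_i = V_i V$), with the resulting symbol being precisely
\[
\Phi_i(w) = P_{\clw}(I_{\cls} - w V^*)^{-1} V_i|_{\clw} = P_{\clw}(I_{\cls} - w P_{\cls} M_z^*)^{-1} M_{\kappa_i}|_{\clw},
\]
since $V^* = P_{\cls} M_z^*|_{\cls}$ and $V_i = M_{\kappa_i}|_{\cls}$.

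So the proof would proceed as follows. First I would set $V = M_z|_{\cls}$ and $V_i = M_{\kappa_i}|_{\cls}$ for $i = 1, \ldots, n$, noting that since $\cls$ is invariant for the whole tuple, $(V, V_1, \ldots, V_n)$ is a commuting tuple of shifts on $\cls$ (the shift property for each was checked in the run-up to Theorem~\ref{thm-2}). Next I would invoke the Wold--von Neumann decomposition to get the unitary $\Pi_V : \cls \raro H^2_{\clw}(\D)$ with $\Pi_V V = M_z \Pi_V$. Then, for each $i$, apply Theorem~\ref{thm-commutator} with $\clh = \cls$, the shift $V$, and the bounded operator $C = V_i$, using $CV = VC$; this yields $\Pi_V V_i \Pi_V^* = M_{\Phi_i}$ with $\Phi_i$ the stated function, hence $\Pi_V V_i = M_{\Phi_i} \Pi_V$. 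Since $\Pi_V$ is unitary and simultaneously intertwines $V \mapsto M_z$ and $V_i \mapsto M_{\Phi_i}$ for every $i$, it witnesses the unitary equivalence of $(M_z|_{\cls}, M_{\kappa_1}|_{\cls}, \ldots, M_{\kappa_n}|_{\cls})$ on $\cls$ with $(M_z, M_{\Phi_1}, \ldots, M_{\Phi_n})$ on $H^2_{\clw}(\D)$, which is the assertion.

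There is essentially no obstacle here: the corollary is a packaging of intermediate facts already derived, and the only care needed is bookkeeping — confirming that the operator $\Phi_i$ appearing in the corollary's statement, written with $P_{\cls} M_z^*$, agrees with the $\Phi_i$ in Theorem~\ref{thm-2} and in the proof (it does, since $(I_{\cls} - wV^*)^{-1} = \sum_{m\geq 0} w^m V^{*m}$ and $V^{*m} = P_{\cls} M_z^{*m}|_{\cls}$), and that the commuting-shift hypothesis of Theorem~\ref{thm-2} is automatically satisfied here because $\cls$ being invariant is part of the hypothesis. If one prefers a self-contained phrasing, the single displayed chain $\Pi_V V = M_z \Pi_V$ together with $\Pi_V V_i = M_{\Phi_i}\Pi_V$, $i = 1,\dots,n$, with $\Pi_V$ unitary, is the whole proof; I would simply cite the relevant lines of the proof of Theorem~\ref{thm-2} rather than reprove them.
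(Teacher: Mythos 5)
Your proof is correct and essentially the same as the paper's: the paper takes the unitary $X = M_\Theta : H^2_{\clw}(\D) \to \Theta H^2_{\clw}(\D) = \cls$ from Theorem \ref{thm-2} and notes it intertwines the two tuples, which is exactly your $\Pi_V^*$, since by construction $\Pi_{\cls} = i_{\cls}\circ\Pi_V^* = M_\Theta$. Citing the intertwining relations $\Pi_V V = M_z \Pi_V$ and $\Pi_V V_i = M_{\Phi_i}\Pi_V$ directly from the proof of Theorem \ref{thm-2}, as you do, is just a repackaging of the same argument.
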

\begin{proof}
Let $\clw$, $\Theta$ and $\{\Phi_i\}_{i=1}^n \subseteq
H^\infty_{\clb(\clw)}(\D)$ be as in Theorem \ref{thm-2}. Then it
follows that
\[
X : H^2_{\clw}(\D) \raro \Theta H^2_{\clw}(\D) = \cls,
\]
is a unitary operator, where
\[
X = M_{\Theta}.
\]
It is now clear that $X$ intertwines $(M_z, M_{\Phi_1}, \ldots,
M_{\Phi_n})$ on $H^2_{\clw}(\D)$ and $(M_z|_{\cls},
M_{\kappa_1}|_{\cls}, \ldots, M_{\kappa_n}|_{\cls})$ on $\cls$. This
completes the proof of the corollary.
\end{proof}

Let $\cle$ be a Hilbert space, and let $\cls \subseteq
H^2_{\cle_n}(\D)$ be an invariant subspace for $M_z$. Then $\cls =
\Theta H^2_{\clw}(\D)$, where $\clw = \cls \ominus z \cls$ and
$\Theta \in H^\infty_{\clb(\clw, \cle_n)}(\D)$ is the Beurling, Lax
and Halmos inner function. A natural question arises in connection
with Remark \ref{rem-Beur}: Under what additional condition(s) on
$\Theta$ is $\cls$ also invariant for $(M_{\kappa_1}, \ldots,
M_{\kappa_n})$? An answer to this question directly follows, with
appropriate reformulation, from Theorem \ref{thm-2} and Remark
\ref{rem-thetaw}:

\begin{thm}\label{th-unit equiv}
Let $\cle$ be a Hilbert space, and let $\cls \subseteq
H^2_{\cle_n}(\D)$ be an invariant subspace for $M_z$ on
$H^2_{\cle_n}(\D)$. Let $\cls = \Theta H^2_{\clw}(\D)$, where $\clw
= \cls \ominus z \cls$ and $\Theta \in H^\infty_{\clb(\clw,
\cle_n)}(\D)$ is the Beurling Lax and Halmos inner function. Set
\[
\Phi_i(w) = P_{\clw} M_{\Theta} (I_{H^2_{\clw}(\D)} - w M_z^*)^{-1}
M_{\Theta}^* M_{\kappa_i}|_{\clw},
\]
for all $w \in \D$ and $i = 1, \ldots, n$. Then $\cls$ is invariant
for $(M_{\kappa_1}, \ldots, M_{\kappa_n})$ if and only if
$(M_{\Phi_1}, \ldots, M_{\Phi_n})$ on $H^2_{\clw}(\D)$ is an
$n$-tuple of commuting shifts, and
\[
\kappa_i \Theta = \Theta \Phi_i,
\]
for all $i = 1, \ldots, n$. Moreover, in this case, $(M_z|_{\cls},
M_{\kappa_1}|_{\cls}, \ldots, M_{\kappa_n}|_{\cls})$ on $\cls$ and
$(M_z, M_{\Phi_1}, \ldots, M_{\Phi_n})$ on $H^2_{\clw}(\D)$ are
unitarily equivalent.
\end{thm}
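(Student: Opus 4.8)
The plan is to derive Theorem~\ref{th-unit equiv} directly from Theorem~\ref{thm-2} together with Remark~\ref{rem-thetaw}, observing that the only genuinely new content is the reconciliation of the two formulas for $\Phi_i$ and the bookkeeping needed to pass from ``$\cls$ invariant for $M_z$'' (so that the Beurling--Lax--Halmos factorization $\cls = \Theta H^2_{\clw}(\D)$ is already available) to ``$\cls$ invariant for the full tuple $(M_z, M_{\kappa_1}, \ldots, M_{\kappa_n})$.''

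First I would note that, by hypothesis, $\cls$ is invariant for $M_z$, so $\clw = \cls \ominus z\cls$ is the wandering subspace for $M_z|_{\cls}$ and we have the inner function $\Theta \in H^\infty_{\clb(\clw, \cle_n)}(\D)$ with $\cls = \Theta H^2_{\clw}(\D)$; equivalently $M_{\Theta}$ is an isometry with range $\cls$ and $P_{\cls} = M_{\Theta} M_{\Theta}^*$. Next, since $\cls$ is automatically invariant for $M_z$, the condition ``$\cls$ is invariant for $(M_z, M_{\kappa_1}, \ldots, M_{\kappa_n})$'' is equivalent to ``$\cls$ is invariant for $(M_{\kappa_1}, \ldots, M_{\kappa_n})$.'' Therefore Theorem~\ref{thm-2}, applied verbatim to this $\cls$, says that the latter holds if and only if $(M_{\Phi_1}, \ldots, M_{\Phi_n})$ is an $n$-tuple of commuting shifts on $H^2_{\clw}(\D)$ and $\kappa_i \Theta = \Theta \Phi_i$ for all $i$, where $\Phi_i(w) = P_{\clw}(I_{\cls} - w P_{\cls} M_z^*)^{-1} M_{\kappa_i}|_{\clw}$. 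The one point requiring care is that the $\Phi_i$ appearing in the statement of Theorem~\ref{th-unit equiv} is written in the alternative form $\Phi_i(w) = P_{\clw} M_{\Theta}(I_{H^2_{\clw}(\D)} - w M_z^*)^{-1} M_{\Theta}^* M_{\kappa_i}|_{\clw}$; but Remark~\ref{rem-thetaw} establishes precisely that these two expressions coincide, via $P_{\cls} M_z^{*m} = M_{\Theta} M_z^{*m} M_{\Theta}^*$ and a Neumann-series expansion of $(I_{\cls} - w P_{\cls} M_z^*)^{-1}$. So no new computation is needed; one simply cites Remark~\ref{rem-thetaw} to identify the two $\Phi_i$'s and then invokes Theorem~\ref{thm-2}.

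For the ``moreover'' clause, I would argue that once $\cls$ is invariant for the full tuple and the factorization $\cls = \Theta H^2_{\clw}(\D)$ with $\kappa_i \Theta = \Theta \Phi_i$ is in hand, the operator $X = M_{\Theta} : H^2_{\clw}(\D) \to \cls$ is a unitary (being an isometry onto its range $\cls$), and it intertwines the two tuples: $M_{\Theta} M_z = M_z M_{\Theta}$ since $\Theta$ is a left multiplier and $M_z$ acts as the shift on both sides, while $M_{\kappa_i} M_{\Theta} = M_{\Theta} M_{\Phi_i}$ is exactly the operator form of the multiplier identity $\kappa_i \Theta = \Theta \Phi_i$. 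Restricting the target to $\cls$ turns $M_z$ and $M_{\kappa_i}$ into $M_z|_{\cls}$ and $M_{\kappa_i}|_{\cls}$, yielding the asserted unitary equivalence. This is exactly the content of Corollary~\ref{cor-ueqv}, so I would just cite that corollary.

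I do not anticipate a real obstacle here: the theorem is essentially a repackaging of Theorem~\ref{thm-2}, Remark~\ref{rem-thetaw}, and Corollary~\ref{cor-ueqv} under the simplifying hypothesis that $\cls$ is already $M_z$-invariant. The only mildly delicate point is making sure that the two formulas for $\Phi_i$ really match on all of $\clw$ (not merely formally), which is guaranteed because $\|w P_{\cls} M_z^*\| < 1$ for $w \in \D$ makes the Neumann series converge in operator norm, so the manipulations in Remark~\ref{rem-thetaw} are rigorous; I would state this explicitly and otherwise keep the proof short.
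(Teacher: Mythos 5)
Your proposal is correct and takes essentially the same route as the paper, which likewise obtains Theorem~\ref{th-unit equiv} by combining Theorem~\ref{thm-2} with the identification of the two formulas for $\Phi_i$ carried out in Remark~\ref{rem-thetaw}, the ``moreover'' clause being exactly the intertwining-by-$M_{\Theta}$ argument of Corollary~\ref{cor-ueqv}. The one point you pass over quickly---matching the inner function produced by Theorem~\ref{thm-2} with the given Beurling--Lax--Halmos inner function $\Theta$ (which is determined only up to a constant unitary right factor)---is treated with the same brevity in the paper itself, so your write-up is faithful to the paper's own proof.
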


Thus the $n$-tuples of commuting shifts $(M_{\Phi_1}, \ldots,
M_{\Phi_n})$ on $H^2_{\cll}(\D)$, for Hilbert spaces $\cll$ and
inner multipliers $\{\Phi_i\}_{i=1}^n \subseteq
H^\infty_{\clb(\cll)}(\D)$, yielding invariant subspaces of
vector-valued Hardy spaces over $\D^{n+1}$ are distinguished among
the general $n$-tuples of commuting shifts by the fact that
\[
\Phi_i(w) = P_{\cll}(I_{\cls} - w P_{\cls} M_z^*)^{-1}
M_{\kappa_i}|_{\cll} \quad \quad (w \in \D),
\]
where $\cls = \Theta H^2_{\cll}(\D)$ for some inner function $\Theta
\in H^\infty_{\clb(\cll, \cle_n)}(\D)$, and
\[
\kappa_i \Theta = \Theta \Phi_i,
\]
for all $i = 1, \ldots, n$. Moreover, in view of Remark
\ref{rem-thetaw}, the above condition is equivalent to the condition
that
\[
\Phi_i(w) = P_{\clw} M_{\Theta} (I_{H^2_{\cll}(\D)} - w M_z^*)^{-1}
M_{\Theta}^* M_{\kappa_i}|_{\clw},
\]
for some inner function $\Theta \in H^\infty_{\clb(\cll,
\cle_n)}(\D)$ such that
\[
\kappa_i \Theta = \Theta \Phi_i,
\]
for all $i = 1, \ldots, n$.

\newsection{Representations of model isometries}\label{sec-nshift}

In connection with Theorem \ref{thm-1} (or part (i) of Theorem
\ref{thm-main}), a natural question arises: Given a Hilbert space
$\cle$, how to identify Hilbert spaces $\clf$ and
$\clb(\clf)$-valued multipliers $\{\Psi\}_{i=1}^n \subseteq
H^\infty_{\clb(\clf)}(\D)$ such that $(M_z, M_{\Psi_1}, \ldots,
M_{\Psi_n})$ on $H^2_{\clf_n}(\D)$ and $(M_{z}, M_{\kappa_1},
\ldots, M_{\kappa_{n}})$ on $H^2_{\cle_n}(\D)$ are unitarily
equivalent. More generally, given a Hilbert space $\cle$,
characterize $(n+1)$-tuples of commuting shifts on Hilbert spaces
that are unitarily equivalent to $(M_{z}, M_{\kappa_1}, \ldots,
M_{\kappa_{n}})$ on $H^2_{\cle_n}(\D)$.

This question has a simple answer, although a rigorous proof of it
involves some technicalities. More specifically, the answer to this
question is related to a numerical invariant, the rank of an
operator associated with the Szeg\"{o} kernel on $\D^{n+1}$. First,
however, we need a few more definitions.

Let $(T_1, \ldots, T_m)$ be an $m$-tuple of commuting contractions
on a Hilbert space $\clh$. Define the \textit{defect operator}
\cite{GY} corresponding to $(T_1, \ldots, T_m)$ as
\[
\mathbb{S}_m^{-1}(T_1, \ldots, T_m) = \mathop{\sum}_{0 \leq |\bm{k}|
\leq m} (-1)^{|\bm{k}|} T_1^{k_1} \cdots T_m^{k_m} T_1^{* k_1}
\cdots T_m^{* k_m},
\]
where $0 \leq k_i \leq 1$, $i = 1, \ldots, m$. This definition is
motivated by the representation of the Szeg\"{o} kernel on the
polydisc $\D^m$ (see Section 2). We say that $(T_1, \ldots, T_m)$ is
of \textit{rank} $p$ ($p \in \mathbb{N} \cup \{\infty\}$) if
\[
\mbox{rank~} [\mathbb{S}_{m}^{-1}(T_1, \ldots, T_m)] = p,
\]
and we write
\[
\text{rank~} (T_1, \ldots, T_m) = p.
\]
The defect operators plays an important role in multivariable
operator theory (cf. \cite{KG, GY}). For instance, if $\cle$ is a
Hilbert space, then the defect operator of the multiplication
operator tuple $(M_{z_1}, \ldots, M_{z_n})$ on $H^2_{\cle}(\D^n)$ is
given by
\[
\mathbb{S}_n^{-1}(M_{z_1}, \ldots, M_{z_n}) = P_{H^2_c(\D^n)}
\otimes I_{\cle},
\]
where $P_{H^2_c(\D^n)}$ denotes the orthogonal projection of
$H^2(\D^n)$ onto the one dimensional space of constant functions.
Furthermore, as is evident from the definition (and also see the
proof of Theorem \ref{thm-1}), the defect operator for $(M_z,
M_{\kappa_1}, \ldots, M_{\kappa_n})$ on $H^2_{\cle_n}(\D)$ is given
by
\[
\mathbb{S}_{n+1}^{-1}(M_z, M_{\kappa_1}, \ldots, M_{\kappa_n}) =
P_{H^2_c(\D)} \otimes P_{H^2_c(\D^n)} \otimes I_{\cle}.
\]
In particular,
\[
\mbox{dim~} \cle = \mbox{rank~} (M_z, M_{\kappa_1}, \ldots,
M_{\kappa_n}) = \text{rank~} (M_{z_1}, \ldots, M_{z_n}).
\]

Now let $\cle$ and $\clk$ be Hilbert spaces, and let $(V, V_1
\ldots, V_{n})$ be an $(n+1)$-tuple of commuting shifts on $\clk$.
Suppose that $(V, V_1 \ldots, V_{n})$ on $\clk$ and $(M_{z},
M_{\kappa_1}, \ldots, M_{\kappa_{n}})$ on $H^2_{\cle_n}(\D)$ are
unitarily equivalent. In this case, it is necessary that $M_z$ on
$H^2_{\cle_n}(\D)$ and $V$ on $\clk$ are unitarily equivalent. As $V
V_i = V_i V$ and $V_i V_j = V_j V_i$ for all $i, j = 1, \ldots, n$,
Theorem \ref{thm-commutator} implies that $(V, V_1, \ldots, V_n)$
and $(M_z, M_{\Phi_1}, \ldots, M_{\Phi_n})$ on $H^2_{\clw}(\D)$ are
unitarily equivalent, where $\clw = \clk \ominus V \clk$, and
\[
\Phi_i(z) = P_{\clw} (I_{\clk} - z V^*)^{-1} V_i|_{\clw},
\]
for all $z \in \D$ and $i = 1, \ldots, n$. Since $(M_{z},
M_{\kappa_1}, \ldots, M_{\kappa_{n}})$ on $H^2_{\cle_n}(\D)$ is
doubly commuting, another necessary condition is that $(V, V_1,
\ldots, V_{n+1})$ is doubly commuting. In particular, $V^* V_i = V_i
V^*$, and so
\[
V^{*m} V_i = V_i V^{*m},
\]
for all $m \geq 0$ and $i = 1, \ldots, n$. Using $V^{*m}|_{\clw} =
0$ for all $m \geq 1$, this implies that $\Phi_i(z) = P_{\clw}
V_i|_{\clw}$ for all $z \in \D$. Again using $V V_i^* = V_i^* V$, we
have
\[
V_i (I - V V^*) = (I - V V^*) V_i,
\]
for all $i = 1, \ldots, n$. This implies that $\clw$ is a reducing
subspace for $V_i$, and hence we obtain
\[
\Phi_i(z) = V_i|_{\clw},
\]
that is, ${\Phi_i}$ is a constant shift-valued function on $\D$ for
all $i = 1, \ldots, n$. This observation leads to the following
proposition:

\begin{prop}\label{prop-1}
Let $(V, V_1, \ldots, V_n)$ be an $(n+1)$-tuple of doubly commuting
shifts on some Hilbert space $\clh$. Let $\clw = \clh \ominus V
\clh$, and let
\[
\Phi_i(z) = V_i|_{\clw} \quad \quad (i = 1, \ldots, n),
\]
for all $z \in \D$. Then $\clw$ is reducing for $V_i$, $i = 1,
\ldots, n$, and $(V, V_1, \ldots, V_n)$ and $(M_z, M_{\Phi_1},
\ldots, M_{\Phi_n})$ on $H^2_{\clw}(\D)$ are unitarily equivalent.
\end{prop}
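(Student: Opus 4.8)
The plan is to run the Wold--von Neumann machinery of Section 2 on the first shift $V$ and then push the remaining isometries through Theorem \ref{thm-commutator}, using double commutativity to collapse the resulting multipliers to constants. First I would use that $V$ is a shift on $\clh$, so that $\clh = \oplus_{m\ge 0} V^m \clw$ with $\clw = \ker V^* = \clh \ominus V\clh$, and the Wold--von Neumann decomposition $\Pi_V : \clh \to H^2_\clw(\D)$, $\Pi_V(V^m\eta) = z^m \eta$, is unitary with $\Pi_V V = M_z \Pi_V$. This already accounts for the first coordinate of the claimed unitary equivalence.

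Next I would check that $\clw$ is reducing for each $V_i$. From $V V_i = V_i V$ one gets, on taking adjoints, $V_i^* V^* = V^* V_i^*$, hence $V_i^*(\ker V^*) \subseteq \ker V^*$; from the doubly commuting relation $V^* V_i = V_i V^*$ one gets $V^*(V_i\eta) = V_i(V^*\eta) = 0$ for $\eta\in\clw$, hence $V_i(\ker V^*) \subseteq \ker V^*$. So $V_i\clw \subseteq \clw$ and $V_i^*\clw \subseteq \clw$, the operator $\Phi_i := V_i|_\clw \in \clb(\clw)$ is well defined, and purity of $V_i$ restricts to $\clw$, so $\Phi_i$ is a shift on $\clw$; moreover $V_iV_j = V_jV_i$ restricts to $\Phi_i\Phi_j = \Phi_j\Phi_i$, so $(M_{\Phi_1},\dots,M_{\Phi_n})$ is a commuting tuple of shifts on $H^2_\clw(\D)$.

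Then I would apply Theorem \ref{thm-commutator} with $C = V_i$ (legitimate since $CV = VC$): it gives $\Pi_V V_i \Pi_V^* = M_{\Psi_i}$ with $\Psi_i(z) = P_\clw(I_\clh - zV^*)^{-1} V_i|_\clw$. Expanding $(I_\clh - zV^*)^{-1} = \sum_{m\ge 0} z^m V^{*m}$ and using $V^{*m}V_i = V_iV^{*m}$ together with $V^{*m}\eta = 0$ for $\eta\in\clw$ and $m\ge 1$, every term with $m\ge 1$ vanishes, so $\Psi_i(z) = P_\clw V_i|_\clw = V_i|_\clw = \Phi_i$ (the last equality because $\clw$ is $V_i$-invariant). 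Thus $\Psi_i$ is the constant function $\Phi_i$, so $\Pi_V V_i = M_{\Phi_i}\Pi_V$; combined with $\Pi_V V = M_z\Pi_V$, the unitary $\Pi_V$ intertwines $(V, V_1,\dots,V_n)$ with $(M_z, M_{\Phi_1},\dots,M_{\Phi_n})$ on $H^2_\clw(\D)$, which is the assertion.

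The argument is largely bookkeeping once Theorem \ref{thm-commutator} is available; the one place where the full hypothesis is genuinely spent --- and which I regard as the crux --- is the twofold use of double commutativity ($V^*V_i = V_iV^*$ and its adjoint): once to make $\clw$ reducing for $V_i$, and once to collapse the Halmos-type multiplier $\Psi_i$ of Theorem \ref{thm-commutator} down to the constant $\Phi_i = V_i|_\clw$. Without double commuting one would only obtain a (generically non-constant) inner-type multiplier, so this is exactly the point where "doubly commuting" is needed rather than merely "commuting".
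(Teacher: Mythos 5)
Your proposal is correct and follows essentially the same route as the paper: apply Theorem \ref{thm-commutator} with $C = V_i$, use $V^{*m}V_i = V_iV^{*m}$ together with $V^{*m}|_{\clw} = 0$ to collapse the multiplier to the constant $V_i|_{\clw}$, and use double commutativity to see that $\clw$ is reducing for $V_i$. The only cosmetic difference is that you verify the reducing property directly on $\ker V^*$ (and before invoking the commutator theorem), whereas the paper derives it from $V_i(I - VV^*) = (I - VV^*)V_i$ afterwards; the substance is identical.
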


In particular, if $\cll$ is a Hilbert space and $(M_z, M_{\Phi_1},
\ldots, M_{\Phi_n})$ on $H^2_{\cll}(\D)$, for some
$\{\Phi_i\}_{i=1}^n \subseteq H^\infty_{\clb(\cll)}(\D)$, is a tuple
of doubly commuting shifts, then
\[
\Phi_i(z) = \Phi_i(0) \quad \quad (z \in \D),
\]
that is, $\Phi$ is a constant function for all $i = 1, \ldots, n$.

Now we return to $(V, V_1 \ldots, V_{n})$, which in turn is an
$(n+1)$-tuple of doubly commuting shifts on $\clh$. For simplicity
of notation, set $U_1 = V$, $U_{i+1} = V_i$ for all $i = 1, \ldots,
n$, and let
\[
\cld = \text{ran~} \mathbb{S}_{n+1}^{-1}(V, V_1, \ldots, V_n) =
\mathop{\cap}_{i=1}^{n+1} \ker U_i^*,
\]
is the wandering subspace for $(V, V_1, \ldots, V_{n})$ (cf.
\cite{JS}). From here, one can use the fact that (cf. Theorem 3.3 in
\cite{JS})
\[
\clh = \mathop{\oplus}_{\bm{k} \in \mathbb{Z}^{n+1}_+} U^{\bm{k}}
\cld,
\]
to prove that the map $\Gamma : \clh \raro H^2_{\cld}(\D^{n+1})$
defined by
\[
\Gamma(U^{\bm{k}} \eta) = \z^{\bm{k}} \eta \quad \quad (\bm{k} \in
\mathbb{Z}^{n+1}_+, \eta \in \cld),
\]
is a unitary and
\[
\Gamma U_i = M_{z_i} \Gamma,
\]
for all $i = 1, \ldots, n+1$. Therefore, $(V, V_1, \ldots, V_n)$ on
$\clh$ and $(M_{z_1}, \ldots, M_{z_{n+1}})$ on
$H^2_{\cld}(\D^{n+1})$ are unitarily equivalent. In addition, if
$\cle$ is a Hilbert space, and
\[
\dim \cle = \text{rank~ }(V, V_1, \ldots, V_n) \;\; ( = \dim \cld),
\]
then it follows that (see the equivalence of (ii) and (v) of Theorem
3.3 in \cite{JS}) $(M_{z_1}, \ldots, M_{z_{n+1}})$ on
$H^2_{\cld}(\D^{n+1})$ and $(M_{z_1}, \ldots, M_{z_{n+1}})$ on
$H^2_{\cle}(\D^{n+1})$ are unitarily equivalent. But then Theorem
\ref{thm-1} yields immediately that $(M_{z_1}, \ldots, M_{z_{n+1}})$
on $H^2_{\cld}(\D^{n+1})$ and $(M_{z}, M_{\kappa_1}, \ldots,
M_{\kappa_{n}})$ on $H^2_{\cle_n}(\D)$ are unitarily equivalent.
This gives the following:

\begin{thm}\label{thm-dcequiv}
In the setting of Proposition \ref{prop-1} the following hold: $(V,
V_1, \ldots, V_n)$ on $\clh$, $(M_z, M_{\Psi_1}, \ldots,
M_{\Psi_n})$ on $H^2_{\clw}(\D)$, and $(M_z, M_{\kappa_1}, \ldots,
M_{\kappa_n})$ on $H^2_{\cle_n}(\D)$ are unitarily equivalent, where
$\cle$ is a Hilbert space and
\[
\mbox{dim~} \cle = \mbox{rank~} (V, V_1, \ldots, V_n).
\]
\end{thm}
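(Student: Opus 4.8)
The plan is to assemble Theorem~\ref{thm-dcequiv} from the pieces already in place, so the ``proof'' is really a bookkeeping exercise that chains together Proposition~\ref{prop-1}, the rank computation for $(M_z, M_{\kappa_1}, \ldots, M_{\kappa_n})$, Theorem~3.3 of \cite{JS}, and Theorem~\ref{thm-1}. First I would invoke Proposition~\ref{prop-1}: since $(V, V_1, \ldots, V_n)$ is an $(n+1)$-tuple of doubly commuting shifts on $\clh$, that proposition already gives that $\clw = \clh \ominus V\clh$ reduces each $V_i$, that $\Phi_i(z) = V_i|_{\clw}$ is a constant $\clb(\clw)$-valued function, and that $(V, V_1, \ldots, V_n)$ on $\clh$ is unitarily equivalent to $(M_z, M_{\Phi_1}, \ldots, M_{\Phi_n})$ on $H^2_{\clw}(\D)$. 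Writing $\Psi_i = \Phi_i$ this is the first of the three asserted equivalences.

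Next I would produce the unitary equivalence between $(V, V_1, \ldots, V_n)$ on $\clh$ and $(M_{z_1}, \ldots, M_{z_{n+1}})$ on $H^2_{\cld}(\D^{n+1})$, where $\cld = \mathrm{ran}\, \mathbb{S}_{n+1}^{-1}(V, V_1, \ldots, V_n) = \bigcap_{i=1}^{n+1}\ker U_i^*$ with $U_1 = V$ and $U_{i+1} = V_i$. This is exactly the content invoked in the paragraph preceding the theorem: because the $U_i$ are doubly commuting shifts, Theorem~3.3 of \cite{JS} gives the Wold-type decomposition $\clh = \bigoplus_{\bm{k}\in\mathbb{Z}_+^{n+1}} U^{\bm{k}}\cld$, and the map $\Gamma(U^{\bm{k}}\eta) = \z^{\bm{k}}\eta$ is then a well-defined unitary intertwining each $U_i$ with $M_{z_i}$. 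I would then note that the hypothesis $\dim\cle = \mathrm{rank}\,(V, V_1, \ldots, V_n) = \dim\cld$ lets one replace $\cld$ by $\cle$ (the tuples $(M_{z_1}, \ldots, M_{z_{n+1}})$ on $H^2_{\cld}(\D^{n+1})$ and on $H^2_{\cle}(\D^{n+1})$ are unitarily equivalent via $I_{H^2(\D^{n+1})}\otimes W$ for any unitary $W:\cld\to\cle$; this is the equivalence of (ii) and (v) in Theorem~3.3 of \cite{JS}). Finally, applying Theorem~\ref{thm-1} to the Hilbert space $\cle$ identifies $(M_{z_1}, \ldots, M_{z_{n+1}})$ on $H^2_{\cle}(\D^{n+1})$ with $(M_z, M_{\kappa_1}, \ldots, M_{\kappa_n})$ on $H^2_{\cle_n}(\D)$, and transitivity of unitary equivalence closes the chain linking all three tuples.

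The only genuinely substantive point — and it is not really an obstacle since it is quoted from \cite{JS} — is the Wold-type decomposition $\clh = \bigoplus_{\bm{k}} U^{\bm{k}}\cld$ for a tuple of doubly commuting shifts, together with the identification of $\cld$ as the range of the defect operator $\mathbb{S}_{n+1}^{-1}$; everything else is formal. I would be careful to spell out that the doubly commuting hypothesis is what makes the summands $U^{\bm{k}}\cld$ pairwise orthogonal and what makes $\Gamma$ isometric onto all of $H^2_{\cld}(\D^{n+1})$, and that the same hypothesis is inherited by $(V, V_1, \ldots, V_n)$ by assumption. I would also remark in passing that the identity $\mathrm{rank}\,(V, V_1, \ldots, V_n) = \dim\cld$ is immediate from the definition of the defect operator and the description $\cld = \mathrm{ran}\,\mathbb{S}_{n+1}^{-1}(V, V_1, \ldots, V_n)$, so the dimension hypothesis in the statement is exactly what is needed. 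With these observations in hand the proof is a two- or three-line concatenation of the cited results.
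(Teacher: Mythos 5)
Your proposal is correct and follows essentially the same route as the paper: Proposition \ref{prop-1} for the equivalence with $(M_z, M_{\Phi_1}, \ldots, M_{\Phi_n})$ on $H^2_{\clw}(\D)$, the Wold-type decomposition $\clh = \oplus_{\bm{k}} U^{\bm{k}}\cld$ from Theorem 3.3 of \cite{JS} to pass to $(M_{z_1}, \ldots, M_{z_{n+1}})$ on $H^2_{\cld}(\D^{n+1})$, the dimension count $\dim\cld = \mbox{rank~}(V, V_1, \ldots, V_n) = \dim\cle$ to replace $\cld$ by $\cle$, and finally Theorem \ref{thm-1} plus transitivity. No gaps; your remarks on where double commutativity enters match the paper's use of \cite{JS}.
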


Therefore, an $(n+1)$-tuple of doubly commuting shift operators
$(M_z, M_{\Phi_1}, \ldots, M_{\Phi_n})$ is completely determined by
the numerical invariant $\mbox{rank~} (M_z, M_{\Phi_1}, \ldots,
M_{\Phi_n})$:

\begin{cor}
Let $\cle$ and $\clf$ be Hilbert spaces. Let $(M_z, M_{\Psi_1},
\ldots, M_{\Psi_n})$ be an $(n+1)$-tuple of commuting shifts on
$H^2_{\clf}(\D)$. Then $(M_z, M_{\Psi_1}, \ldots, M_{\Psi_n})$ on
$H^2_{\clf}(\D)$ and $(M_z, M_{\kappa_1}, \ldots, M_{\kappa_n})$ on
$H^2_{\cle_n}(\D)$ are unitarily equivalent if and only if $(M_z,
M_{\Psi_1}, \ldots, M_{\Psi_n})$ is doubly commuting and
\[
\mbox{dim~} \cle = \mbox{rank~} (M_z, M_{\Psi_1}, \ldots,
M_{\Psi_n}).
\]
\end{cor}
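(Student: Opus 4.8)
The plan is to read off both implications from the machinery already assembled in this section, principally Theorem \ref{thm-dcequiv}, together with the elementary observation that the defect operator, and hence the rank, is preserved under unitary equivalence. No new computation is required; all the substantive work was front-loaded into Theorem \ref{thm-dcequiv} (which itself rests on Proposition \ref{prop-1} and Theorem \ref{thm-1}).

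For the ``only if'' direction, suppose a unitary $U$ implements the equivalence of $(M_z, M_{\Psi_1}, \ldots, M_{\Psi_n})$ on $H^2_{\clf}(\D)$ with $(M_z, M_{\kappa_1}, \ldots, M_{\kappa_n})$ on $H^2_{\cle_n}(\D)$. Since the defect operator $\mathbb{S}_{n+1}^{-1}(T_1, \ldots, T_{n+1})$ is a fixed $*$-polynomial in its entries, conjugation by $U$ sends $\mathbb{S}_{n+1}^{-1}(M_z, M_{\Psi_1}, \ldots, M_{\Psi_n})$ to $\mathbb{S}_{n+1}^{-1}(M_z, M_{\kappa_1}, \ldots, M_{\kappa_n})$; in particular these operators have the same rank, and by the computation preceding Proposition \ref{prop-1} this common value equals $\mbox{dim~}\cle$. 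Similarly, the doubly commuting relations for $(M_z, M_{\kappa_1}, \ldots, M_{\kappa_n})$ transfer under $U$ to the corresponding relations for $(M_z, M_{\Psi_1}, \ldots, M_{\Psi_n})$, so the latter tuple is doubly commuting.

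For the ``if'' direction, assume $(M_z, M_{\Psi_1}, \ldots, M_{\Psi_n})$ is an $(n+1)$-tuple of doubly commuting shifts on $H^2_{\clf}(\D)$ with $\mbox{dim~}\cle = \mbox{rank~}(M_z, M_{\Psi_1}, \ldots, M_{\Psi_n})$. Applying Theorem \ref{thm-dcequiv} with $\clh = H^2_{\clf}(\D)$, $V = M_z$ and $V_i = M_{\Psi_i}$ produces a Hilbert space $\cle'$ with $\mbox{dim~}\cle' = \mbox{rank~}(M_z, M_{\Psi_1}, \ldots, M_{\Psi_n}) = \mbox{dim~}\cle$ for which $(M_z, M_{\Psi_1}, \ldots, M_{\Psi_n})$ on $H^2_{\clf}(\D)$ and $(M_z, M_{\kappa_1}, \ldots, M_{\kappa_n})$ on $H^2_{\cle'_n}(\D)$ are unitarily equivalent. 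Fixing a unitary $\tau : \cle' \raro \cle$, the induced unitary $I_{H^2(\D)} \otimes I_{H^2(\D^n)} \otimes \tau : H^2_{\cle'_n}(\D) \raro H^2_{\cle_n}(\D)$ intertwines $M_z$ with $M_z$ and each $M_{\kappa_i}$ with $M_{\kappa_i}$, because $\kappa_i(w) = M_{z_i} \otimes I$ acts only on the $H^2(\D^n)$ factor and is independent of the multiplicity space. Composing this with the previous equivalence yields the desired unitary equivalence between $(M_z, M_{\Psi_1}, \ldots, M_{\Psi_n})$ on $H^2_{\clf}(\D)$ and $(M_z, M_{\kappa_1}, \ldots, M_{\kappa_n})$ on $H^2_{\cle_n}(\D)$.

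There is no real obstacle; the only points needing a line of care are that the rank is genuinely a unitary invariant (immediate from the $*$-polynomial form of the defect operator) and the routine bookkeeping that two model tuples with coefficient spaces of equal dimension are unitarily equivalent. \qed
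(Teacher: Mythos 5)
Your proof is correct and follows essentially the same route the paper intends: the corollary is read off from Theorem \ref{thm-dcequiv} together with the computation of the defect operator of the model tuple (rank $=\dim\cle$) and the observation that both double commutativity and the rank of the defect operator are preserved under unitary equivalence. Your extra bookkeeping step with the unitary $\tau:\cle'\to\cle$ is exactly the routine identification the paper leaves implicit.
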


The above corollary should be compared with the uniqueness of the
multiplicity of shift operators on Hilbert spaces \cite{H}.

\newsection{Nested invariant subspaces and uniqueness}

Now we proceed to the description of nested invariant subspaces of
$H^2_{\cle_n}(\D)$. Let $\cls_1$ and $\cls_2$ be two closed
invariant subspaces for $(M_z, M_{\kappa_1}, \ldots, M_{\kappa_n})$
on $H^2_{\cle_n}(\D)$. Let $\clw_j = \cls \ominus z \cls_j$, and let
\[
\Phi_{j,i}(w) = P_{\clw_j}(I_{\cls_j} - w P_{\cls_j} M_z^*)^{-1}
M_{\kappa_i}|_{\clw_j},
\]
for all $w \in \D$, $j = 1, 2$, and $i = 1, \ldots, n$. Hence by
Theorem \ref{thm-2} there exists an inner function $\Theta_j \in
H^\infty_{\clb(\clw_{j}, \cle_n)}(\D)$ such that
\[
\cls_j = \Theta_j H^2_{\clw_{j}}(\D),
\]
and
\begin{equation}\label{eqn-ijtheta}
\kappa_i \Theta_j = \Theta_j \Phi_{j,i},
\end{equation}
for all $j = 1, 2$, and $i = 1, \ldots, n$. Now, let
\[
\cls_1 \subseteq \cls_2,
\]
that is
\[
\Theta_1 H^2_{\clw_{1}}(\D) \subseteq \Theta_2 H^2_{\clw_{2}}(\D).
\]
Then there exists an inner multiplier $\Psi \in
H^\infty_{\clb(\clw_{1}, \clw_{2})}(\D)$  \cite{FF} such that
\[
\Theta_1 = \Theta_2 \Psi.
\]
Using this in (\ref{eqn-ijtheta}), we get
\[
\begin{split}
\Theta_2 \Psi \Phi_{1, i} & = \Theta_1 \Phi_{1, i}
\\
& = \kappa_i \Theta_1
\\
& = \kappa_i \Theta_2 \Psi
\\
& = \Theta_2 \Phi_{2, i} \Psi,
\end{split}
\]
and so
\[
\Psi \Phi_{1, i} = \Phi_{2, i} \Psi,
\]
for all $i = 1, \ldots, n$. On the other hand, given two invariant
subspaces $\cls_j = \Theta_j H^2_{\clw_j}(\D)$, $j = 1, 2,$ for
$(M_z, M_{\kappa_1}, \ldots, M_{\kappa_n})$ on $H^2_{\cle_n}(\D)$
described as above, if there exists an inner multiplier $\Psi \in
H^\infty_{\clb(\clw_{1}, \clw_{2})}(\D)$ such that $\Theta_1 =
\Theta_2 \Psi$, then it readily follows that $\cls_1 \subseteq
\cls_2$. We state this in the following theorem:

\begin{thm}
Let $\cle$ be a Hilbert space, and let $\cls_1 = \Theta_1
H^2_{\clw_{1}}(\D)$ and $\cls_2 = \Theta_2 H^2_{\clw_{2}}(\D)$ be
two invariant subspaces for $(M_z, M_{\kappa_1}, \ldots,
M_{\kappa_n})$ on $H^2_{\cle_n}(\D)$. Let
\[
\Phi_{j,i}(w) = P_{\clw_j}(I_{\cls_j} - w P_{\cls_j} M_z^*)^{-1}
M_{\kappa_i}|_{\clw_j},
\]
for all $w \in \D$, $j = 1, 2$, and $i = 1, \ldots, n$. Then $\cls_1
\subseteq \cls_2$ if and only if there exists an inner multiplier
$\Psi \in H^\infty_{\clb(\clw_{1}, \clw_{2})}(\D)$ such that
$\Theta_1 = \Theta_2 \Psi$ and $\Psi \Phi_{1, i} = \Phi_{2, i} \Psi$
for all $i = 1, \ldots, n$.
\end{thm}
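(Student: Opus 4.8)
The plan is to carry out exactly the computation sketched in the paragraph preceding the statement, treating the two implications separately; no new machinery is needed beyond Theorem \ref{thm-2} and the divisibility theory of operator-valued inner functions.

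For the forward implication, suppose $\cls_1 \subseteq \cls_2$, i.e. $\Theta_1 H^2_{\clw_1}(\D) \subseteq \Theta_2 H^2_{\clw_2}(\D)$. First I would invoke the standard divisibility property of Beurling, Lax and Halmos inner functions (cf. \cite{FF}): as $\Theta_1$ and $\Theta_2$ are inner and $\text{ran}\, M_{\Theta_1} \subseteq \text{ran}\, M_{\Theta_2}$, there is an inner multiplier $\Psi \in H^\infty_{\clb(\clw_1, \clw_2)}(\D)$ with $\Theta_1 = \Theta_2 \Psi$. Then I would substitute this into the intertwining identities $\kappa_i \Theta_j = \Theta_j \Phi_{j,i}$ of Theorem \ref{thm-2} (equation (\ref{eqn-ijtheta})) to obtain
\[
\Theta_2 \Psi \Phi_{1,i} = \Theta_1 \Phi_{1,i} = \kappa_i \Theta_1 = \kappa_i \Theta_2 \Psi = \Theta_2 \Phi_{2,i} \Psi .
\]
Since $\Theta_2$ is inner, $M_{\Theta_2}$ is an isometry, hence injective, and cancelling it yields $\Psi \Phi_{1,i} = \Phi_{2,i} \Psi$ for every $i = 1, \ldots, n$.

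For the reverse implication, suppose that such a $\Psi$ exists with $\Theta_1 = \Theta_2 \Psi$. Then $\Psi H^2_{\clw_1}(\D) \subseteq H^2_{\clw_2}(\D)$, and hence
\[
\cls_1 = \Theta_1 H^2_{\clw_1}(\D) = \Theta_2 \bigl( \Psi H^2_{\clw_1}(\D) \bigr) \subseteq \Theta_2 H^2_{\clw_2}(\D) = \cls_2 ;
\]
the relation $\Psi \Phi_{1,i} = \Phi_{2,i} \Psi$ is in fact not used here. One small point worth recording is that, since $\cls_1$ and $\cls_2$ are assumed invariant for the full tuple $(M_z, M_{\kappa_1}, \ldots, M_{\kappa_n})$, the functions $\Phi_{j,i}$ defined in the statement agree with the multipliers produced by Theorem \ref{thm-2}, so that (\ref{eqn-ijtheta}) is available; alternatively one may use the equivalent expression for $\Phi_{j,i}$ from Remark \ref{rem-thetaw}.

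The computation itself is routine, so the only genuinely nontrivial ingredient is the factorization $\Theta_1 = \Theta_2 \Psi$ with $\Psi$ inner, namely the operator-valued Beurling, Lax and Halmos divisibility result, and the only care required afterwards is the cancellation of $M_{\Theta_2}$, which is legitimate because an inner multiplier induces an isometric, in particular injective, multiplication operator. In short, there is no substantial obstacle here: the result is essentially a bookkeeping consequence of Theorem \ref{thm-2} together with the classical divisibility theorem for inner functions.
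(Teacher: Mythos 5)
Your proposal is correct and follows essentially the same route as the paper: the divisibility theorem for operator-valued inner functions gives $\Theta_1 = \Theta_2 \Psi$, the intertwining identities $\kappa_i \Theta_j = \Theta_j \Phi_{j,i}$ from Theorem \ref{thm-2} yield $\Theta_2 \Psi \Phi_{1,i} = \Theta_2 \Phi_{2,i} \Psi$, and cancelling the isometry $M_{\Theta_2}$ gives $\Psi \Phi_{1,i} = \Phi_{2,i} \Psi$, while the converse is the immediate inclusion $\Theta_2 \Psi H^2_{\clw_1}(\D) \subseteq \Theta_2 H^2_{\clw_2}(\D)$. Nothing is missing; your observation that the commutation relation is not needed for the converse inclusion matches the paper's treatment as well.
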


We now proceed to prove the uniqueness of the representations of
invariant subspaces as described in Theorem \ref{thm-2}. Let $\cle$
be a Hilbert space, and let $\cls$ be an invariant subspace for
$(M_z, M_{\kappa_1}, \ldots, M_{\kappa_n})$ on $H^2_{\cle_n}(\D)$.
Let $\cls = \Theta H^2_{\clw}(\D)$ and
\[
\kappa_i \Theta = \Theta {\Phi_i} \quad \quad (i = 1, \ldots, n),
\]
in the notation of Theorem \ref{thm-2}. Now assume that
$\tilde{\Theta} \in H^\infty_{\clb(\tilde{\clw})}(\D)$ is an inner
function, for some Hilbert space $\tilde{\clw}$, and
\[
\cls = \tilde{\Theta} H^2_{\tilde{\clw}}(\D).
\]
Also assume that
\[
\kappa_i \tilde{\Theta} = \tilde{\Theta} \tilde{\Phi}_i,
\]
for some shift $M_{\tilde{\Phi}_i}$ on $H^2_{\tilde{\clw}}(\D)$ and
$i = 1, \ldots, n$. Then as an application of the uniqueness of the
Beurling, Lax and Halmos inner functions (cf. Foias and Frazho,
Theorem 2.1 in page 239 \cite{FF}) to
\[
\Theta H^2_{\clw}(\D) = \tilde{\Theta} H^2_{\tilde{\clw}}(\D),
\]
we get
\[
\Theta  = \tilde{\Theta} \tau,
\]
for some unitary operator (constant in $z$) $\tau : \clw \raro
\tilde{\clw}$. Then, the previous line of argument shows that
\[
\tau \Phi_i = \tilde{\Phi}_i \tau,
\]
for all $i = 1, \ldots, n$. This proves the uniqueness of the
representations of invariant subspaces in Theorem \ref{thm-2}.

\begin{thm}\label{thm-unique}
In the setting of Theorem \ref{thm-2}, if $\cls = \tilde{\Theta}
H^2_{\tilde{\clw}}(\D)$ and $\kappa_i \tilde{\Theta} =
\tilde{\Theta} \tilde{\Phi}_i$ for some Hilbert space
$\tilde{\clw}$, inner function $\tilde{\Theta} \in
H^\infty_{\clb(\tilde{\clw})}(\D)$ and shift $M_{\tilde{\Phi}_i}$ on
$H^2_{\tilde{\clw}}(\D)$, $i = 1, \ldots, n$, then there exists a
unitary operator (constant in $z$) $\tau : \clw \raro \tilde{\clw}$
such that
\[
\Theta  = \tilde{\Theta} \tau,
\]
and
\[
\tau \Phi_i = \tilde{\Phi}_i \tau,
\]
for all $i = 1, \ldots, n$.
\end{thm}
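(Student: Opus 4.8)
The plan is to reduce everything to the uniqueness of the Beurling, Lax and Halmos inner function, and then propagate that uniqueness through the operator equations $\kappa_i \Theta = \Theta \Phi_i$. First I would invoke the hypothesis that $\Theta H^2_{\clw}(\D) = \cls = \tilde{\Theta} H^2_{\tilde{\clw}}(\D)$, so that both $\Theta$ and $\tilde{\Theta}$ are Beurling, Lax and Halmos inner functions representing the same $M_z$-invariant subspace $\cls$ of $H^2_{\cle_n}(\D)$. By the classical uniqueness statement (Foias and Frazho, Theorem 2.1, page 239 of \cite{FF}), there is a unitary operator $\tau : \clw \raro \tilde{\clw}$, constant in $z$, with
\[
\Theta = \tilde{\Theta} \tau.
\]
Equivalently, at the level of multiplication operators, $M_{\Theta} = M_{\tilde{\Theta}} M_{\tau}$ with $M_{\tau}$ unitary.

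Next I would substitute this factorization into the two intertwining relations. From $\kappa_i \Theta = \Theta \Phi_i$ we get $\kappa_i \tilde{\Theta} \tau = \tilde{\Theta} \tau \Phi_i$, while from $\kappa_i \tilde{\Theta} = \tilde{\Theta} \tilde{\Phi}_i$ we get $\kappa_i \tilde{\Theta} \tau = \tilde{\Theta} \tilde{\Phi}_i \tau$. Comparing the two right-hand sides yields
\[
\tilde{\Theta} \tilde{\Phi}_i \tau = \tilde{\Theta} \tau \Phi_i,
\]
and since $\tilde{\Theta}$ is inner, hence $M_{\tilde{\Theta}}$ is an isometry and in particular injective, we may cancel $\tilde{\Theta}$ on the left to obtain
\[
\tilde{\Phi}_i \tau = \tau \Phi_i, \qquad i = 1, \ldots, n,
\]
which is the desired relation $\tau \Phi_i = \tilde{\Phi}_i \tau$ (read with $\tau$ now regarded as the constant multiplier $M_{\tau} : H^2_{\clw}(\D) \raro H^2_{\tilde{\clw}}(\D)$, so that the equality is one of multipliers; evaluating at a point of $\D$ gives the equality of the operator-valued functions). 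This completes the argument.

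The only genuinely delicate point is bookkeeping about where each equality lives: $\tau$ is simultaneously a unitary $\clw \raro \tilde{\clw}$ and the induced constant multiplier $M_{\tau}$, and the relations $\kappa_i \Theta = \Theta \Phi_i$ etc.\ are identities of $\clb(\cdot)$-valued analytic functions on $\D$ which one upgrades to identities of multiplication operators (and back) freely. Once one is careful that $M_{\tilde{\Theta}}$ is left-invertible on its range — indeed an isometry onto $\cls$ — the cancellation is immediate and there is no analytic obstacle; the substance of the theorem is entirely carried by the classical Beurling–Lax–Halmos uniqueness, which is quoted. Thus I expect no serious obstacle beyond this notational care.
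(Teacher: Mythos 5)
Your proposal is correct and is essentially the paper's own argument: the paper likewise applies the Beurling--Lax--Halmos uniqueness theorem (same reference in \cite{FF}) to $\Theta H^2_{\clw}(\D) = \tilde{\Theta} H^2_{\tilde{\clw}}(\D)$ to obtain $\Theta = \tilde{\Theta}\tau$ with $\tau$ a constant unitary, and then gets $\tau \Phi_i = \tilde{\Phi}_i \tau$ by exactly your substitution-and-cancellation step (quoting ``the previous line of argument,'' namely the computation $\tilde{\Theta}\tau\Phi_i = \kappa_i \tilde{\Theta}\tau = \tilde{\Theta}\tilde{\Phi}_i\tau$ followed by left cancellation of the isometric multiplier $M_{\tilde{\Theta}}$). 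No gaps; your bookkeeping remark about passing between multipliers and multiplication operators is the same implicit step the paper takes.
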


\newsection{Applications}

In this section, first, we explore a natural connection between the
intertwining maps on vector-valued Hardy space over $\D$ and the
commutators of the multiplication operators on the Hardy space over
$\D^{n+1}$. Then, as a noteworthy added benefit to our approach, we
compute a complete set of unitary invariants for invariant subspaces
of vector-valued Hardy space over $\D^{n+1}$. We also test our main
results on invariant subspaces unitarily equivalent to
$H^2_{\cle_n}(\D)$. As a by-product, we obtain some useful results
about the structure of invariant subspaces for the Hardy space. We
begin with the following definition.

Let $\cle$ and $\tilde{\cle}$ be two Hilbert spaces. Let $\cls$ and
$\tilde{\cls}$ be invariant subspaces for the $(n+1)$-tuples of
multiplication operators on $H^2_{\cle_n}(\D)$ and
$H^2_{\tilde{\cle}_n}(\D)$, respectively. We say that $\cls$ and
$\tilde{\cls}$ are \textit{unitarily equivalent}, and write $\cls
\cong \tilde{\cls}$, if there is a unitary map $U : \cls \raro
\tilde{\cls}$ such that
\[
U M_z|_{\cls} = M_z|_{\tilde{\cls}} U \quad \text{and} \quad U
M_{\kappa_i}|_{\cls} = M_{\kappa_i}|_{\tilde{\cls}} U,
\]
for all $i = 1, \ldots, n$.

\subsection{Intertwining maps}

Recall that, given a Hilbert space $\cle$, there exists a unitary
operator $U_{\cle} : H^2_{\cle}(\D^{n+1}) \raro H^2_{\cle_n}(\D)$
(see Section 3) such that
\[
U_{\cle} M_{z_1} = M_z U_{\cle},
\]
and
\[
U_{\cle} M_{z_{i+1}} = M_{\kappa_i} U_{\cle},
\]
for all $i = 1, \ldots, n$. Let $\clf$ be another Hilbert space, and
let $X : H^2_{\cle}(\D^{n+1}) \raro H^2_{\clf}(\D^{n+1})$ be a
bounded linear operator such that
\begin{equation}\label{eq-XMz}
X M_{z_i} = M_{z_i} X,
\end{equation}
for all $i = 1, \ldots, n+1$. Set
\[
X_n = U_{\clf} X U^*_{\cle}.
\]
Then $X_n : H^2_{\cle_n}(\D) \raro H^2_{\clf_n}(\D)$ is bounded and
\begin{equation}\label{eq-XMKappa}
X_n M_z = M_z X_n \quad \text{and} \quad  X_n M_{\kappa_i} =
M_{\kappa_i} X_n,
\end{equation}
for all $i = 1, \ldots, n$. Conversely, a bounded linear operator
$X_n : H^2_{\cle_n}(\D) \raro H^2_{\clf_n}(\D)$ satisfying
\eqref{eq-XMKappa} yields a canonical bounded linear map $X :
H^2_{\cle}(\D^{n+1}) \raro H^2_{\clf}(\D^{n+1})$, namely
\[
X = U_{\clf}^* X_n U_{\cle}
\]
such that \eqref{eq-XMz} holds. Moreover, this construction shows
that $X \in \clb(H^2_{\cle}(\D^{n+1}), H^2_{\clf}(\D^{n+1}))$ is a
contraction (respectively, isometry, unitary, etc.) if and only if
$X_n \in \clb(H^2_{\cle_n}(\D), H^2_{\clf_n}(\D))$ is a contraction
(respectively, isometry, unitary, etc.).

For brevity, any map satisfying \eqref{eq-XMKappa} will be referred
to \textit{module maps}.

\subsection{A complete set of unitary invariants}

Let $\cle$ and $\tilde{\cle}$ be Hilbert spaces, and let $\{\Psi_1,
\ldots, \Psi_n\} \subseteq H^\infty_{\clb(\cle)}(\D)$ and
$\{\tilde{\Psi}_1, \ldots, \tilde{\Psi}_n\} \subseteq
H^\infty_{\clb(\tilde{\cle})}(\D)$. We say that $\{\Psi_1, \ldots,
\Psi_n\}$ and $\{\tilde{\Psi}_1, \ldots, \tilde{\Psi}_n\}$ coincide
if there exists a unitary operator $\tau : \cle \raro \tilde{\cle}$
such that
\[
\tau \Psi_i(z) = \tilde{\Psi}_i(z) \tau,
\]
for all $z \in \D$ and $i = 1, \ldots, n$.

Now let $\cls \subseteq H^2_{\cle_n}(\D)$ and $\tilde{\cls}
\subseteq H^2_{\tilde{\cle_n}}(\D)$ be invariant subspaces for
$(M_z, M_{\kappa_1}, \ldots, M_{\kappa_n})$ on $H^2_{\cle_n}(\D)$
and $H^2_{\tilde{\cle}_n}(\D)$, respectively. Let $\cls \cong
\tilde{\cls}$. By Theorem \ref{th-unit equiv}, this implies that
$(M_z, M_{\Phi_1}, \ldots, M_{\Phi_n})$ on $H^2_{\clw}(\D)$ and
$(M_z, M_{\tilde{\Phi}_1}, \ldots, M_{\tilde{\Phi}_n})$ on
$H^2_{\tilde{\clw}}(\D)$ are unitarily equivalent, where $\clw =
\cls \ominus z \cls$, $\tilde{\clw} = \tilde{\cls} \ominus z
\tilde{\cls}$ and
\[
\Phi_i(w) = P_{\clw}(I_{\cls} - w P_{\cls} M_z^*)^{-1}
M_{\kappa_i}|_{\clw},
\]
and
\[
\tilde{\Phi}_i(w) = P_{\tilde{\clw}}(I_{\tilde{\cls}} - w
P_{\tilde{\cls}} M_z^*)^{-1} M_{\kappa_i}|_{\tilde{\clw}},
\]
for all $w \in \D$ and $i = 1, \ldots, n$. Let $U : H^2_{\clw}(\D)
\raro H^2_{\tilde{\clw}}(\D)$ be a unitary map such that
\[
U M_z = M_z U,
\]
and
\[
U M_{\Phi_i} = M_{\tilde{\Phi}_i} U,
\]
for all $i = 1, \ldots, n$. The former condition implies that
\[
U = I_{H^2(\D)} \otimes \tau,
\]
for some unitary operator $\tau : \clw \raro \tilde{\clw}$, and so
the latter condition implies that
\[
\tau \Phi_i(z) = \tilde{\Phi}_i(z) \tau,
\]
for all $z \in \D$ and $i = 1, \ldots, n$. Therefore $\{\Phi_1,
\ldots, \Phi_n\}$ and $\{\tilde{\Phi}_1, \ldots, \tilde{\Phi}_n\}$
coincide. To prove the converse, assume now that the above equality
holds for a given unitary operator $\tau : \clw \raro \tilde{\clw}$.
Obviously $U = I_{H^2(\D)} \otimes \tau$ is a unitary from
$H^2_{\clw}(\D)$ to $H^2_{\tilde{\clw}}(\D)$. Clearly $U M_z = M_z
U$ and $U M_{\Phi_i} = M_{\tilde{\Phi}_i} U$ for all $i = 1, \ldots,
n$. So we have the following theorem on a complete set of unitary
invariants for invariant subspaces:

\begin{thm}\label{th-completeset}
Let $\cle$ and $\tilde{\cle}$ be Hilbert spaces. Let $\cls \subseteq
H^2_{\cle_n}(\D)$ and $\tilde{\cls} \subseteq
H^2_{\tilde{\cle_n}}(\D)$ be invariant subspaces for $(M_z,
M_{\kappa_1}, \ldots, M_{\kappa_n})$ on $H^2_{\cle_n}(\D)$ and
$H^2_{\tilde{\cle}_n}(\D)$, respectively. Then $\cls \cong
\tilde{\cls}$ if and only if $\{\Phi_1, \ldots, \Phi_n\}$ and
$\{\tilde{\Phi}_1, \ldots, \tilde{\Phi}_n\}$ coincide.
\end{thm}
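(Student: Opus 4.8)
The plan is to transfer the problem, via the identifications already constructed, to the intrinsic multiplier data $\{\Phi_i\}$ attached to each invariant subspace, and then to observe that unitary equivalence of the restricted tuples forces the wandering-subspace unitary to be constant in $z$, hence induces a coincidence of the $\Phi_i$. Concretely, I would first set up notation: write $\clw = \cls \ominus z\cls$, $\tilde\clw = \tilde\cls \ominus z\tilde\cls$, and let $\Phi_i, \tilde\Phi_i$ be the corresponding multipliers as in the statement. By Theorem \ref{thm-2} (applied on $H^2_{\cle_n}(\D)$ and on $H^2_{\tilde\cle_n}(\D)$) and by Corollary \ref{cor-ueqv}, the tuple $(M_z|_{\cls}, M_{\kappa_1}|_{\cls},\ldots,M_{\kappa_n}|_{\cls})$ on $\cls$ is unitarily equivalent (via $M_\Theta$) to $(M_z, M_{\Phi_1},\ldots,M_{\Phi_n})$ on $H^2_{\clw}(\D)$, and likewise on the tilde side. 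Composing these, $\cls \cong \tilde\cls$ in the sense defined above is equivalent to the assertion that $(M_z, M_{\Phi_1},\ldots,M_{\Phi_n})$ on $H^2_{\clw}(\D)$ and $(M_z, M_{\tilde\Phi_1},\ldots,M_{\tilde\Phi_n})$ on $H^2_{\tilde\clw}(\D)$ are unitarily equivalent.

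Granting that reduction, I would prove the forward direction as follows. Let $U : H^2_{\clw}(\D) \raro H^2_{\tilde\clw}(\D)$ be a unitary with $U M_z = M_z U$ and $U M_{\Phi_i} = M_{\tilde\Phi_i} U$ for all $i$. The first intertwining relation says $U$ commutes with the shift $M_z$ on the vector-valued Hardy spaces, and it is a standard fact (the uniqueness of the Wold decomposition, or Beurling--Lax--Halmos applied to graphs) that any unitary intertwining $M_z$ with $M_z$ between $H^2_{\clw}(\D)$ and $H^2_{\tilde\clw}(\D)$ must be of the form $U = I_{H^2(\D)} \otimes \tau$ for a unitary $\tau : \clw \raro \tilde\clw$; indeed $U$ maps $\clw = \ker M_z^*$ onto $\tilde\clw = \ker M_z^*$ and intertwines the shifts, so it is the ampliation of its restriction $\tau := U|_{\clw}$. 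Feeding $U = I_{H^2(\D)} \otimes \tau$ into the second relation $U M_{\Phi_i} = M_{\tilde\Phi_i} U$ and evaluating on $z^m\eta$ with $\eta \in \clw$ gives, after matching coefficients of $z^m$, the identity $\tau \Phi_i(z) = \tilde\Phi_i(z)\tau$ for all $z \in \D$ and all $i$; that is, $\{\Phi_1,\ldots,\Phi_n\}$ and $\{\tilde\Phi_1,\ldots,\tilde\Phi_n\}$ coincide.

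For the converse, suppose $\tau : \clw \raro \tilde\clw$ is a unitary with $\tau\Phi_i(z) = \tilde\Phi_i(z)\tau$ for all $z$ and $i$. Put $U = I_{H^2(\D)}\otimes\tau$, a unitary from $H^2_{\clw}(\D)$ onto $H^2_{\tilde\clw}(\D)$. Then $U$ visibly commutes with $M_z$, and the pointwise intertwining of the symbols gives $U M_{\Phi_i} = M_{\tilde\Phi_i} U$ for all $i$ (check on monomials $z^m\eta$). Composing with the unitaries $M_\Theta$ and $M_{\tilde\Theta}$ from Corollary \ref{cor-ueqv} produces a unitary $M_{\tilde\Theta} U M_\Theta^* : \cls \raro \tilde\cls$ intertwining $M_z|_{\cls}$ with $M_z|_{\tilde\cls}$ and $M_{\kappa_i}|_{\cls}$ with $M_{\kappa_i}|_{\tilde\cls}$, i.e.\ $\cls \cong \tilde\cls$.

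The only delicate point is the step asserting that an $M_z$-intertwining unitary between two vector-valued $H^2$ spaces over $\D$ is necessarily a constant ampliation $I_{H^2(\D)}\otimes\tau$; I expect this to be the main obstacle in the sense that it is the one place requiring a genuine (if classical) argument rather than bookkeeping. In the paper's setting this is already used implicitly in Section 5 (the uniqueness discussion preceding Theorem \ref{thm-unique}) and is recorded in the ``complete set of unitary invariants'' derivation above; it follows because such a $U$ restricts to a unitary $\clw = \ker M_z^* \raro \tilde\clw = \ker M_z^*$ and then agrees with its own ampliation on the dense span of $\{z^m\eta\}$. Everything else is routine verification on monomials, together with the already-established equivalences of Theorem \ref{thm-2} and Corollary \ref{cor-ueqv}.
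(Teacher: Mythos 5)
Your proposal is correct and follows essentially the same route as the paper: reduce via Corollary \ref{cor-ueqv} (equivalently the last part of Theorem \ref{th-unit equiv}) to unitary equivalence of $(M_z, M_{\Phi_1},\ldots,M_{\Phi_n})$ and $(M_z, M_{\tilde{\Phi}_1},\ldots,M_{\tilde{\Phi}_n})$, then use that any unitary intertwining the shifts must be of the form $I_{H^2(\D)}\otimes\tau$, which yields $\tau\Phi_i(z)=\tilde{\Phi}_i(z)\tau$, with the converse obtained by reversing the construction. Your explicit justification of the ampliation step (via $U$ mapping $\ker M_z^*$ onto $\ker M_z^*$) is exactly what the paper leaves implicit, so nothing is missing.
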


Now, if we consider the Beurling, Lax and Halmos representations of
the given invariant subspaces $\cls$ and $\tilde{\cls}$ as
\[
\cls = \Theta H^2_{\clw}(\D),
\]
and
\[
\tilde{\cls} = \tilde{\Theta} H^2_{\tilde{\clw}}(\D),
\]
where $\Theta \in H^\infty_{\clb(\clw, \cle_n)}(\D)$ and
$\tilde{\Theta} \in H^\infty_{\clb(\tilde{\clw},
\tilde{\cle}_n)}(\D)$, then, in view of Remark \ref{rem-thetaw}, the
multipliers in Theorem \ref{th-completeset} can be represented as
\[
\Phi_i(w) = P_{\clw} M_{\Theta} (I_{H^2_{\clw}(\D)} - w M_z^*)^{-1}
M_{\Theta}^* M_{\kappa_i}|_{\clw},
\]
and
\[
\tilde{\Phi}_i(w) = P_{\tilde{\clw}} M_{\tilde{\Theta}}
(I_{H^2_{\tilde{\clw}}(\D)} - w M_z^*)^{-1} M_{\tilde{\Theta}}^*
M_{\kappa_i}|_{\tilde{\clw}},
\]
for all $w \in \D$ and $i = 1, \ldots, n$.

\subsection{Unitarily equivalent invariant subspaces}

Let $\cle$ and $\clf$ be Hilbert spaces, and let $X_n :
H^2_{\cle_n}(\D) \raro H^2_{\clf_n}(\D)$ be a module map. If $X_n$
is an isometry, then the closed subspace $\cls \subseteq
H^2_{\clf_n}(\D)$ defined by
\[
\cls = X_n (H^2_{\cle_n}(\D)),
\]
is invariant for $(M_z, M_{\kappa_1}, \ldots, M_{\kappa_n})$ on
$H^2_{\clf_n}(\D)$ and $\cls \cong H^2_{\cle_n}(\D)$. In other
words, the tuples $(M_z|_{\cls}, M_{\kappa_1}|_{\cls}, \ldots,
M_{\kappa_n}|_{\cls})$ on $\cls$ and $(M_z, M_{\kappa_1},\ldots,
M_{\kappa_n})$ on $H^2_{\cle_n}(\D)$ are unitarily equivalent.
Conversely, let $\cls \subseteq H^2_{\clf_n}(\D)$ be a closed
invariant subspace for $(M_z, M_{\kappa_1}, \ldots, M_{\kappa_n})$
on $H^2_{\clf_n}(\D)$, and let $\cls \cong H^2_{\cle_n}(\D)$ for
some Hilbert space $\cle$. Let $\tilde{X}_n : H^2_{\cle_n}(\D) \raro
\cls$ be the unitary map which intertwines $(M_z,
M_{\kappa_1},\ldots, M_{\kappa_n})$ on $H^2_{\cle_n}(\D)$ and
$(M_z|_{\cls}, M_{\kappa_1}|_{\cls}, \ldots, M_{\kappa_n}|_{\cls})$
on $\cls$. Suppose that $i_{\cls} : \cls \hookrightarrow
H^2_{\clf_n}(\D)$ is the inclusion map. Then
\[
X_n = i_{\cls} \circ \tilde{X}_n,
\]
is an isometry from $H^2_{\cle_n}(\D)$ to $H^2_{\clf_n}(\D)$, $X_n
M_z  = M_z X_n$, $X_n M_{\kappa_i} = M_{\kappa_i} X_n$ for all $i =
1, \ldots, n$, and
\[
\text{ran~} X_n = \cls.
\]
Therefore, if $\cls \subseteq H^2_{\clf_n}(\D)$ is a closed
invariant subspace for $(M_z, M_{\kappa_1}, \ldots, M_{\kappa_n})$
on $H^2_{\clf_n}(\D)$, then $\cls \cong H^2_{\cle_n}(\D)$, for some
Hilbert space $\cle$, if and only if there exists an isometric
module map $X_n : H^2_{\cle_n}(\D) \raro H^2_{\clf_n}(\D)$ such that
$\cls = X_n (H^2_{\cle_n}(\D))$. Now, it also follows from the
discussion at the beginning of this section that $X :
H^2_{\cle}(\D^{n+1}) \raro H^2_{\clf}(\D^{n+1})$ (corresponding to
the module map $X_n$) is an isometry and $X M_{z_i} = M_{z_i} X$ for
all $i = 1, \ldots, n$. Then Theorem \ref{thm-fibre} tells us that
\[
\text{dim~} \cle \leq \text{dim~} \clf.
\]
Therefore, we have the following theorem:
\begin{thm}\label{th-ues}
Let $\cle$ and $\clf$ be Hilbert spaces, and let  $\cls \subseteq
H^2_{\clf_n}(\D)$ be a closed invariant subspace for $(M_z,
M_{\kappa_1},\ldots, M_{\kappa_n})$ on $H^2_{\clf_n}(\D)$. Then
$\cls \cong H^2_{\cle_n}(\D)$ if and only if there exists an
isometric module map $X_n : H^2_{\cle_n}(\D) \raro H^2_{\clf_n}(\D)$
such that
\[
\cls = X_n H^2_{\cle_n}(\D).
\]
Moreover, in this case
\[
\text{dim~} \cle \leq \text{dim~} \clf.
\]
\end{thm}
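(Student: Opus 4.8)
The plan is to observe that both directions of the equivalence have essentially been carried out in the discussion immediately preceding the statement, so the proof reduces to assembling those steps and then invoking the appendix dimension inequality (Theorem~\ref{thm-fibre}) for the final assertion.

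For the backward implication I would start from an isometric module map $X_n : H^2_{\cle_n}(\D) \raro H^2_{\clf_n}(\D)$. Since isometries have closed range, $\cls := X_n(H^2_{\cle_n}(\D))$ is a closed subspace of $H^2_{\clf_n}(\D)$, and the intertwining relations $X_n M_z = M_z X_n$ and $X_n M_{\kappa_i} = M_{\kappa_i} X_n$ force $M_z \cls \subseteq \cls$ and $M_{\kappa_i} \cls \subseteq \cls$, so $\cls$ is invariant for $(M_z, M_{\kappa_1}, \ldots, M_{\kappa_n})$. Regarding $X_n$ as a unitary onto $\cls$, the same intertwining relations show that $X_n$ implements a unitary equivalence between $(M_z, M_{\kappa_1}, \ldots, M_{\kappa_n})$ on $H^2_{\cle_n}(\D)$ and $(M_z|_{\cls}, M_{\kappa_1}|_{\cls}, \ldots, M_{\kappa_n}|_{\cls})$ on $\cls$; that is, $\cls \cong H^2_{\cle_n}(\D)$.

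For the forward implication I would take the intertwining unitary $\tilde{X}_n : H^2_{\cle_n}(\D) \raro \cls$ witnessing $\cls \cong H^2_{\cle_n}(\D)$ and compose it with the inclusion $i_{\cls} : \cls \hookrightarrow H^2_{\clf_n}(\D)$. Then $X_n := i_{\cls} \circ \tilde{X}_n$ is an isometry with $\text{ran}\,X_n = \cls$, and since $i_{\cls}$ intertwines the restricted tuple on $\cls$ with the ambient tuple on $H^2_{\clf_n}(\D)$, the composite $X_n$ satisfies $X_n M_z = M_z X_n$ and $X_n M_{\kappa_i} = M_{\kappa_i} X_n$ for all $i = 1, \ldots, n$; thus $X_n$ is an isometric module map with range $\cls$.

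Finally, for the dimension bound I would transport $X_n$ through the canonical unitaries of Theorem~\ref{thm-1}: setting $X = U_{\clf}^* X_n U_{\cle} \in \clb(H^2_{\cle}(\D^{n+1}), H^2_{\clf}(\D^{n+1}))$, the discussion in Section~6.1 shows that $X$ is an isometry with $X M_{z_i} = M_{z_i} X$ for all $i = 1, \ldots, n+1$. Applying Theorem~\ref{thm-fibre} to $X$ then yields $\dim \cle \leq \dim \clf$. The only substantive ingredient is this last step: every other part is routine bookkeeping with intertwiners and the canonical identifications, whereas the inequality $\dim \cle \leq \dim \clf$ genuinely requires the appendix result on fibre dimensions and constitutes the real content of the ``moreover'' clause.
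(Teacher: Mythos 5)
Your proposal is correct and follows essentially the same route as the paper: both directions are handled exactly as in the paper's discussion (isometric module map $\Rightarrow$ invariant range unitarily equivalent to $H^2_{\cle_n}(\D)$, and conversely composing the intertwining unitary with the inclusion $i_{\cls}$), and the dimension bound is obtained, as in the paper, by transporting $X_n$ through the canonical unitaries of Section 3 to an intertwining isometry on $H^2_{\cle}(\D^{n+1})$ and invoking Theorem \ref{thm-fibre}.
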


Of particular interest is the case when $\clf = \mathbb{C}$. In this
case (see Section 3) the tensor product Hilbert space $\clf_n =
H^2(\D^n) \otimes \mathbb{C}$ is denoted by $H_n$, that is, $H_n =
H^2(\D^n)$.

\begin{cor}\label{co-ues}
Let $\cls \subseteq H^2_{H_n}(\D)$ be a closed invariant subspace
for $(M_z, M_{\kappa_1},\ldots, M_{\kappa_n})$ on $H^2_{H_n}(\D)$.
Then $\cls \cong H^2_{H_n}(\D)$ if and only if there exists an
isometric module map $X_n : H^2_{H_n}(\D) \raro H^2_{H_n}(\D)$ such
that
\[
\cls = X_n (H^2_{H_n}(\D)).
\]
\end{cor}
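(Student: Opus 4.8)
The plan is to obtain this as the immediate specialization of Theorem \ref{th-ues} to the case $\cle = \clf = \mathbb{C}$. Recall from Section 3 that when $\cll = \mathbb{C}$ the tensor product Hilbert space $\cll_n = H^2(\D^n) \otimes \mathbb{C}$ is identified (via the canonical unitary $z^{\bm{k}} \otimes 1 \mapsto z^{\bm{k}}$) with $H_n = H^2(\D^n)$, and under this identification $\kappa_i(w) = M_{z_i} \otimes I_{\mathbb{C}}$ becomes the function $\kappa_i(w) = M_{z_i} \in \clb(H_n)$. Thus, setting $\cle = \clf = \mathbb{C}$ in Theorem \ref{th-ues}, we have $\cle_n = \clf_n = H_n$, and the hypothesis of the corollary — that $\cls \subseteq H^2_{H_n}(\D)$ is a closed invariant subspace for $(M_z, M_{\kappa_1}, \ldots, M_{\kappa_n})$ — is exactly the hypothesis of Theorem \ref{th-ues} in this case.

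With this bookkeeping in place, the key step is a direct appeal to Theorem \ref{th-ues}: $\cls \cong H^2_{\cle_n}(\D) = H^2_{H_n}(\D)$ if and only if there exists an isometric module map $X_n : H^2_{H_n}(\D) \raro H^2_{H_n}(\D)$ with $\cls = X_n(H^2_{H_n}(\D))$, where ``module map'' means, as in Section 6, that $X_n M_z = M_z X_n$ and $X_n M_{\kappa_i} = M_{\kappa_i} X_n$ for all $i = 1, \ldots, n$. The supplementary dimension inequality in Theorem \ref{th-ues} here reads $\dim \mathbb{C} \leq \dim \mathbb{C}$, i.e.\ $1 \leq 1$, so it conveys no information and is therefore omitted from the statement of the corollary.

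There is essentially no obstacle in this argument; the only matter requiring a moment's care is verifying that the identification $\clf_n \cong H_n$ is compatible with all the relevant operators ($M_z$ and each $M_{\kappa_i}$) simultaneously, which is precisely what was checked in the construction of the unitary $U = \tilde U \hat U$ in Section 3, and that the notion of unitary equivalence $\cls \cong \tilde{\cls}$ used in Theorem \ref{th-ues} specializes correctly when the ambient spaces coincide. Both points are routine, so the proof reduces to a one-line invocation of Theorem \ref{th-ues}.
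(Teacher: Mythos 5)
Your proposal is correct and coincides with the paper's own treatment: the corollary is stated there precisely as the specialization of Theorem \ref{th-ues} to $\cle = \clf = \mathbb{C}$, where $\clf_n = H^2(\D^n) \otimes \mathbb{C}$ is identified with $H_n$, and the dimension inequality becomes vacuous. Nothing further is needed.
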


The above result, in the polydisc setting, was first observed by
Agrawal, Clark and Douglas (see Corollary 1 in \cite{ACD}). Also see
Mandrekar \cite{M}.

We now proceed to analyze doubly commuting invariant subspaces. Let
$\clf$ be a Hilbert space, and let $\cls \subseteq H^2_{\clf_n}(\D)$
be a closed invariant subspace for $(M_z, M_{\kappa_1},\ldots,
M_{\kappa_n})$ on $H^2_{\clf_n}(\D)$. Set
\[
V = M_z|_{\cls},
\]
and
\[
V_i = M_{\kappa_i}|_{\cls},
\]
for all $i = 1, \ldots, n$. We say that $\cls$ is \textit{doubly
commuting} if $V_i^* V_j = V_j V_i^*$ for all $1 \leq i < j \leq n$,
and $V V_l^* = V_l^* V$ for all $l = 1, \ldots, n$.

Now let $\cle$ be a Hilbert space, and suppose that
$H^2_{\cle_n}(\D) \cong \cls$. In view of Theorem \ref{th-ues} this
implies that $(V, V_1, \ldots, V_n)$ on $\cls$ and $(M_z,
M_{\kappa_1},\ldots, M_{\kappa_n})$ on $H^2_{\cle_n}(\D)$ are
unitarily equivalent. Because $H^2_{\cle_n}(\D)$ is doubly commuting
this immediately implies that $\cls$ is doubly commuting.

Conversely, let $\cls$ is doubly commuting. From Theorem
\ref{th-unit equiv} we readily conclude that $(M_z, M_{\Phi_1},
\ldots, M_{\Phi_n})$ on $H^2_{\clw}(\D)$ and $(V, V_1, \ldots, V_n)$
on $\cls$ are unitarily equivalent.

Applying Theorem \ref{thm-dcequiv} with $(M_z, M_{\Phi_1}, \ldots,
M_{\Phi_n})$ in place of $(M_z, M_{\Psi_1}, \ldots, M_{\Psi_n})$, we
see that $(V, V_1, \ldots, V_n)$ on $\cls$  and $(M_z,
M_{\kappa_1},\ldots, M_{\kappa_n})$ on $H^2_{\cle_n}(\D)$ are
unitarily equivalent, where $\cle$ is a Hilbert space. Now,
proceeding as in the proof of the necessary part of Theorem
\ref{th-ues} one checks that there exists a module isometry $X_n :
H^2_{\cle_n}(\D) \raro H^2_{\clf_n}(\D)$ such that
\[
\text{ran~} X_n = \cls.
\]
This proves the following variant of Theorem \ref{th-ues}:

\begin{thm}
Let $\clf$ be a Hilbert space. An invariant subspace $\cls \subseteq
H^2_{\clf_n}(\D)$ is doubly commuting if and only if there exists a
Hilbert space $\cle$ and an isometric module map $X_n :
H^2_{\cle_n}(\D) \raro H^2_{\clf_n}(\D)$ such that
\[
\cls = X_n H^2_{\cle_n}(\D).
\]
Moreover, in this case
\[
\text{dim~} \cle \leq \text{dim~} \clf.
\]
\end{thm}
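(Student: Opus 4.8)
The plan is to show that the notion of a \emph{doubly commuting} invariant subspace is exactly captured by the unitary equivalence $\cls \cong H^2_{\cle_n}(\D)$ for a suitable Hilbert space $\cle$, and then invoke Theorem~\ref{th-ues}. Concretely, write $V = M_z|_{\cls}$ and $V_i = M_{\kappa_i}|_{\cls}$, $i = 1, \ldots, n$, which by Theorem~\ref{thm-2} (or the discussion preceding it) is an $(n+1)$-tuple of commuting shifts on $\cls$.

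\textbf{($\Leftarrow$)} Suppose $\cls = X_n H^2_{\cle_n}(\D)$ for an isometric module map $X_n : H^2_{\cle_n}(\D) \raro H^2_{\clf_n}(\D)$. Then $X_n$ is a unitary onto $\cls$ intertwining $(M_z, M_{\kappa_1}, \ldots, M_{\kappa_n})$ on $H^2_{\cle_n}(\D)$ with $(V, V_1, \ldots, V_n)$ on $\cls$. Since $(M_z, M_{\kappa_1}, \ldots, M_{\kappa_n})$ on $H^2_{\cle_n}(\D)$ is doubly commuting (as recorded in Section~3, where the defect operator computation shows $M_z^* M_{\kappa_i} = M_{\kappa_i} M_z^*$ and $M_{\kappa_i}^* M_{\kappa_j} = M_{\kappa_j} M_{\kappa_i}^*$), the double commutativity relations transfer verbatim through $X_n$ to $(V, V_1, \ldots, V_n)$; hence $\cls$ is doubly commuting.

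\textbf{($\Rightarrow$)} Suppose $\cls$ is doubly commuting, so $(V, V_1, \ldots, V_n)$ is an $(n+1)$-tuple of doubly commuting shifts on $\cls$. This is precisely the hypothesis of Proposition~\ref{prop-1} and hence of Theorem~\ref{thm-dcequiv}: taking $\clw = \cls \ominus V \cls$ and $\Phi_i(z) = V_i|_{\clw}$, that theorem gives a Hilbert space $\cle$ with $\dim \cle = \mathrm{rank}~(V, V_1, \ldots, V_n)$ such that $(V, V_1, \ldots, V_n)$ on $\cls$ and $(M_z, M_{\kappa_1}, \ldots, M_{\kappa_n})$ on $H^2_{\cle_n}(\D)$ are unitarily equivalent. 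In particular $\cls \cong H^2_{\cle_n}(\D)$, and Theorem~\ref{th-ues} then produces the isometric module map $X_n : H^2_{\cle_n}(\D) \raro H^2_{\clf_n}(\D)$ with $\cls = X_n H^2_{\cle_n}(\D)$, together with the dimension bound $\dim \cle \leq \dim \clf$. The main obstacle is the bookkeeping in the forward direction — verifying that a doubly commuting invariant subspace really does satisfy the full doubly commuting shift hypothesis of Proposition~\ref{prop-1} (one must check that $V$ itself, not just the $V_i$, participates correctly, which is immediate from the definition of doubly commuting $\cls$) — but once the reduction to Theorem~\ref{thm-dcequiv} is in place the argument is routine.
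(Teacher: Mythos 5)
Your proposal is correct and takes essentially the same route as the paper: the backward direction transfers the doubly commuting relations through the unitary co-restriction of $X_n$ onto $\cls$, and the forward direction reduces to Theorem \ref{thm-dcequiv} and then Theorem \ref{th-ues} for the module isometry and the bound $\dim \cle \leq \dim \clf$. The only (harmless) difference is that you apply Proposition \ref{prop-1}/Theorem \ref{thm-dcequiv} directly to the doubly commuting shift tuple $(V, V_1, \ldots, V_n)$ on $\cls$, whereas the paper first invokes Theorem \ref{th-unit equiv} to pass to $(M_z, M_{\Phi_1}, \ldots, M_{\Phi_n})$ on $H^2_{\clw}(\D)$ before citing Theorem \ref{thm-dcequiv}; both are valid.
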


The above result, in the polydisc setting, was first observed by
Mandrekar \cite{M}. Also this should be compared with the discussion
prior to Corollary \ref{cor-ueqv} on the application of the
classical Beurling, Lax and Halmos theorem to invariant subspaces of
the Hardy space over the unit disc.

\newsection{Appendix: An inequality on fibre dimensions}\label{sec-app}

Given a Hilbert space $\cle$, the $n$-tuple of multiplication
operators by the coordinate functions $z_i$, $i = 1, \ldots, n$, on
$H^2_{\cle}(\D^n)$ is denoted by $(M_{z_1}^{\cle}, \ldots,
M_{z_n}^{\cle})$. Whenever $\cle$ is clear from the context, we will
omit the superscript $\cle$. Clearly, one can regard $\cle$ as a
closed subspace of $H^2_{\cle}(\D^n)$ by identifying $\cle$ with the
constant $\cle$-valued functions on $\D^n$.

In this appendix, we aim to prove the following result:

\begin{thm}\label{thm-fibre}
Let $\cle_1$ and $\cle_2$ be Hilbert spaces and let $X :
H^2_{\mathcal{E}_1}(\mathbb{D}^n) \rightarrow
H^2_{\mathcal{E}_2}(\mathbb{D}^n)$ be an isometry. If
\[
X M_{z_i}^{\cle_1} = M_{z_i}^{\cle_2} X,
\]
for all $i = 1, \ldots, n$, then
\[
\dim \mathcal{E}_1 \leq \dim \mathcal{E}_2.
\]
\end{thm}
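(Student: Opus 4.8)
The plan is to reduce the statement to a known fact about the Hardy space over the disc by peeling off one variable at a time, mimicking the tensor-decomposition strategy used earlier in the paper. Write $H^2_{\cle}(\D^n) \cong H^2_{\cle_n'}(\D)$ where $\cle_n' = H^2(\D^{n-1}) \otimes \cle$ (via the canonical unitary $\hat U$ from Section 3, applied with the last coordinate singled out), and under this identification $M_{z_n}$ becomes $M_z$. The hypothesis says $X$ commutes with all $M_{z_i}$; in particular $X$ intertwines the shifts $M_z$ on $H^2_{\cle_{1,n}'}(\D)$ and $H^2_{\cle_{2,n}'}(\D)$. Since an isometry that intertwines two pure shifts $M_z$ on vector-valued Hardy spaces over $\D$ is necessarily a multiplication operator $M_\Psi$ by an inner multiplier $\Psi \in H^\infty_{\clb(\cle_{1,n}', \cle_{2,n}')}(\D)$ (the Beurling--Lax--Halmos theorem, cf. Foias and Frazho \cite{FF}), we get in particular that $\Psi(0) : \cle_{1,n}' \raro \cle_{2,n}'$ is an isometry, so $\dim \cle_{1,n}' \leq \dim \cle_{2,n}'$.

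This alone does not finish the argument, because $\dim \cle_{i,n}'$ is either finite-equal-to $\dim \cle_i$ (when $\cle_i$ is infinite dimensional) or $\aleph_0 \cdot \dim \cle_i$, so for infinite-dimensional $\cle_i$ the inequality $\dim \cle_{1,n}' \leq \dim \cle_{2,n}'$ is exactly $\dim \cle_1 \leq \dim \cle_2$ and we are done, while for finite-dimensional spaces we need a genuinely $n$-variable argument. So the real plan is an induction on $n$. For $n=1$ the statement is precisely the Beurling--Lax--Halmos observation above: an intertwiner isometry is $M_\Psi$ and evaluating at $0$ gives an isometric embedding $\cle_1 \hookrightarrow \cle_2$. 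For the inductive step, identify $H^2_{\cle_i}(\D^n) \cong H^2_{(\cle_i)_{n-1}}(\D)$ with $(\cle_i)_{n-1} = H^2(\D^{n-1}) \otimes \cle_i$, singling out the first coordinate. Then $X$ intertwines $M_z$ on the two sides, so $X = M_\Psi$ for an inner $\Psi \in H^\infty_{\clb((\cle_1)_{n-1}, (\cle_2)_{n-1})}(\D)$, and the remaining commutation relations $X M_{z_{j+1}} = M_{z_{j+1}} X$ for $j = 1, \ldots, n-1$ translate (using that the $M_{z_{j+1}}$ become constant $\clb((\cle_i)_{n-1})$-valued multipliers $M_{\kappa_j}$, as in the proof of Theorem \ref{thm-1}) into $\Psi(w) \kappa_j = \kappa_j \Psi(w)$ for all $w$, i.e. each coefficient operator of $\Psi$ commutes with $M_{z_j} \otimes I_{\cle_i}$ on $H^2(\D^{n-1}) \otimes \cle_i$. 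In particular the isometry $\Psi(0) : H^2_{\cle_1}(\D^{n-1}) \raro H^2_{\cle_2}(\D^{n-1})$ satisfies $\Psi(0) M_{z_j} = M_{z_j} \Psi(0)$ for $j = 1, \ldots, n-1$, and the inductive hypothesis applied to $\Psi(0)$ yields $\dim \cle_1 \leq \dim \cle_2$.

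The step I expect to be the main obstacle is verifying that $\Psi(0)$ is genuinely an isometry with the full set of intertwining properties, and more precisely pinning down the dimension bookkeeping: one must be careful that $X = M_\Psi$ isometric forces $\Psi(w)$ to be an isometry for \emph{some} (indeed a.e., hence by analyticity for all, and in particular $w = 0$) point, rather than merely a contraction. This is standard --- $M_\Psi^* M_\Psi = I$ forces $\Psi(e^{i\theta})$ to be an isometry a.e. on $\T$, and an inner multiplier with this property has $\Psi(0)$ isometric as well; but it is the one place where the argument genuinely uses that $X$ is an isometry and not just a bounded intertwiner. A clean alternative, avoiding induction, is to note that the composite unitary identifications reduce everything in one stroke to the $n$-fold iterated picture $H^2_{\cle_i}(\D^n) \cong H^2(\D^n) \otimes \cle_i$ with $X$ intertwining the doubly commuting shift $n$-tuple $(M_{z_1}, \ldots, M_{z_n}) \otimes I$; then $\cle_i$ is recovered intrinsically as $\cle_i = \bigcap_{j=1}^n \ker (M_{z_j} \otimes I_{\cle_i})^* = \operatorname{ran} \mathbb{S}_n^{-1}(M_{z_1}, \ldots, M_{z_n})\otimes I_{\cle_i}$, the joint wandering subspace, and since $X$ intertwines the tuples it maps the joint wandering subspace of the domain isometrically into that of the codomain, giving $\dim \cle_1 \leq \dim \cle_2$ directly. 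I would present the inductive version as the main line and remark on the wandering-subspace shortcut.
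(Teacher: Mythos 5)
Your reduction to one variable via $H^2_{\cle}(\D^n) \cong H^2_{(\cle)_{n-1}}(\D)$ and the Beurling--Lax--Halmos identification $X = M_\Psi$ with $\Psi$ inner is fine, but the step you yourself flag as the main obstacle is resolved incorrectly, and it breaks both the base case and the inductive step. An isometric $M_\Psi$ forces $\Psi(e^{i\theta})$ to be an isometry a.e.\ on $\mathbb{T}$, \emph{not} at interior points, and ``a.e., hence by analyticity for all, in particular $w=0$'' is false: isometricity of boundary values does not propagate into the disc. The simplest counterexample already satisfies all your hypotheses: take $\cle_1 = \cle_2 = \mathbb{C}$, $n \geq 1$, and $X = M_{z_1}$ on $H^2(\D^n)$; under your identification $\Psi(w) = w\,I$, so $\Psi(0) = 0$ is as far from an isometry as possible. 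Thus $\Psi(0)$ gives no embedding of $\cle_1$ (or of $H^2_{\cle_1}(\D^{n-1})$) into the target, and the induction as written does not start. Your alternative ``wandering-subspace shortcut'' has the same defect: $X$ intertwines only the forward shifts, not their adjoints, so there is no reason it should map $\bigcap_j \ker M_{z_j}^{*}$ of the domain into the joint wandering subspace of the codomain --- the same example $X = M_{z_1}$ sends the constants onto $z_1\mathbb{C}$, which is orthogonal to the codomain's wandering subspace.

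The inductive scheme can be repaired, but only by working on the boundary in the single variable you have peeled off: the operator-valued inner function $\Psi$ has strong radial limits $\Psi(e^{i\theta})$ a.e., these are isometries a.e.\ (this is exactly the Sz.-Nagy--Foias fact you allude to), and the relations $\Psi(w)\kappa_j = \kappa_j\Psi(w)$ pass to the a.e.\ limits, so for a suitable $\theta$ the operator $\Psi(e^{i\theta})$ is an isometry from $H^2_{\cle_1}(\D^{n-1})$ to $H^2_{\cle_2}(\D^{n-1})$ intertwining $(M_{z_1},\ldots,M_{z_{n-1}})$, to which the inductive hypothesis applies. That fixed argument is genuinely different from the paper's: the paper deliberately avoids boundary-value theory by extending $X$ to an isometry $\tilde X$ of $L^2_{\cle_1}(\mathbb{T}^n)$ into $L^2_{\cle_2}(\mathbb{T}^n)$ commuting with the bilateral multiplications $M_{e^{i\theta_j}}$, and then compares $\dim\cle_1$ with $\dim\cle_2$ by a direct Parseval/Bessel count against the orthonormal family $\{e_{\bm{k}} f_j\}$, where $\{f_j\}$ is an orthonormal basis of $\tilde X(\cle_1)$. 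As submitted, however, your proof has a genuine gap at the $\Psi(0)$ step and in the wandering-subspace remark.
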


We believe that the above result (possibly) follows from the
boundary behavior of bounded analytic functions following the
classical case $n = 1$ (see the remark at the end of this appendix).
Here, however, we take a shorter approach than generalizing the
classical theory of bounded analytic functions on the unit polydisc.
We first prove the $L^2$-version of the above statement.

\begin{thm}\label{thm-Lfibre}
Let $\cle_1$ and $\cle_2$ be Hilbert spaces and let $\tilde{X} :
L^2_{\mathcal{E}_1}(\mathbb{T}^n) \rightarrow
L^2_{\mathcal{E}_2}(\mathbb{T}^n)$ be an isometry. If
\[
\tilde{X} M_{e^{i\theta_j}} = M_{e^{i\theta_j}} \tilde{X},
\]
for all $j = 1, \ldots, n$, then
\[
\dim \mathcal{E}_1 \leq \dim \mathcal{E}_2.
\]
\end{thm}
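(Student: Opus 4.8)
\textbf{Proof proposal for Theorem \ref{thm-Lfibre}.}
The plan is to reduce the $L^2$-isometry statement to a pointwise (fibrewise) statement over the torus $\mathbb{T}^n$ by means of a direct integral decomposition. First I would identify $L^2_{\cle_j}(\mathbb{T}^n)$ with the constant-fibre direct integral $\int_{\mathbb{T}^n}^{\oplus} \cle_j \, d\theta$, under which each $M_{e^{i\theta_k}}$ becomes multiplication by the coordinate function $e^{i\theta_k}$, i.e.\ the operator of multiplication by the bounded Borel function $\zeta \mapsto \zeta_k$ acting diagonally on the fibres. The hypothesis that $\tilde X$ intertwines all $n$ multiplication operators $M_{e^{i\theta_k}}$ then says precisely that $\tilde X$ commutes with the range of the $*$-representation of $C(\mathbb{T}^n)$ (equivalently of $L^\infty(\mathbb{T}^n)$, by taking weak-$*$ limits and using that $\tilde X$ is bounded) generated by these multiplications. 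Since this abelian von Neumann algebra is maximal abelian and the decomposition is over its spectrum $\mathbb{T}^n$ with respect to Lebesgue measure, a decomposable-operator theorem (von Neumann's direct integral theory) gives a measurable family $\{\tilde X_\zeta : \cle_1 \to \cle_2\}_{\zeta \in \mathbb{T}^n}$ of bounded operators with $\tilde X = \int^{\oplus} \tilde X_\zeta \, d\theta$ and $\|\tilde X\| = \mathrm{ess\,sup}_\zeta \|\tilde X_\zeta\|$.

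Next I would extract from the global isometry condition $\tilde X^* \tilde X = I$ the pointwise condition $\tilde X_\zeta^* \tilde X_\zeta = I_{\cle_1}$ for almost every $\zeta \in \mathbb{T}^n$. This follows because $\tilde X^* = \int^{\oplus} \tilde X_\zeta^* \, d\theta$ and composition of decomposable operators is decomposable with fibres the composition of the fibres, so $\int^{\oplus} (\tilde X_\zeta^* \tilde X_\zeta - I)\, d\theta = 0$ forces $\tilde X_\zeta^* \tilde X_\zeta = I$ a.e.\ (one can test against constant sections, or simply invoke uniqueness of the decomposable representation of $0$). Thus for almost every $\zeta$ there is an isometry $\tilde X_\zeta : \cle_1 \hookrightarrow \cle_2$, and the existence of a single such isometry immediately yields $\dim \cle_1 \leq \dim \cle_2$. (If one is worried about separability hypotheses for the direct-integral machinery, one can first pass to a separable reducing subspace: pick an orthonormal subset of $\cle_1$ of the appropriate cardinality, let $\cle_1'$ be its closed span and $\cle_2'$ the closed span of its image together with a countable dense set, restrict/compress, and argue there; since $\dim \cle_1 \leq \dim\cle_2$ is equivalent to the existence of an isometric embedding of any separable subspace of $\cle_1$, this is harmless. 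Alternatively one works with $\mathbb{N} \cup \{\infty\}$-valued dimension directly.)

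The step I expect to be the main obstacle is the clean justification that the global isometry relation descends to almost-every fibre, together with the measurable-selection issue of producing the family $\{\tilde X_\zeta\}$ in the first place; these are exactly the points where the standard direct-integral theory (decomposability of operators commuting with a maximal abelian von Neumann algebra, and the fibrewise behaviour of adjoints and products) must be invoked carefully rather than the more elementary fact that $\tilde X$ merely commutes with each $M_{e^{i\theta_k}}$. Once one has $\tilde X_\zeta^*\tilde X_\zeta = I$ a.e., the conclusion is immediate. An alternative, more hands-on route that avoids quoting the full decomposition theorem: expand a section $f \in L^2_{\cle_1}(\mathbb{T}^n)$ in its Fourier series $f = \sum_{\bm m \in \mathbb{Z}^n} \zeta^{\bm m} \hat f(\bm m)$ with $\hat f(\bm m) \in \cle_1$, note that intertwining the coordinate multiplications forces $\tilde X(\zeta^{\bm m}\eta) = \zeta^{\bm m}\, \tilde X(\eta)$ for every $\bm m$ and every constant section $\eta \in \cle_1 \subseteq L^2_{\cle_1}(\mathbb{T}^n)$, so $\tilde X$ is determined by the single bounded operator $T := \tilde X|_{\cle_1} : \cle_1 \to L^2_{\cle_2}(\mathbb{T}^n)$ via $\tilde X f = \sum_{\bm m} \zeta^{\bm m}\, (T\hat f(\bm m))(\cdot)$ --- but here one still needs the decomposition to see that $T$ lands in (and acts as) a multiplication by a fibrewise-isometric family, so the direct-integral argument is hard to bypass entirely. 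I would therefore present the direct-integral proof as the main line.
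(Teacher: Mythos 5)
Your argument is correct, but it is a genuinely different proof from the one in the paper. You reduce to fibres: after upgrading the intertwining with the coordinate unitaries $M_{e^{i\theta_j}}$ (and hence, via adjoints and bounded pointwise limits of trigonometric polynomials, with all of $L^\infty(\mathbb{T}^n)$), you invoke the direct-integral theory of decomposable operators to write $\tilde{X} = \int^{\oplus} \tilde{X}_\zeta\, d\theta$ and deduce $\tilde{X}_\zeta^* \tilde{X}_\zeta = I_{\cle_1}$ a.e., so that a single fibre isometry gives $\dim \cle_1 \leq \dim \cle_2$. The paper deliberately avoids exactly this kind of ``boundary/fibrewise'' machinery: it first notes the statement is trivial unless $m := \dim \cle_2 < \infty$, then takes an orthonormal basis $\{f_j\}_{j \in J}$ of $\tilde{X}(\cle_1)$, observes that the intertwining makes $\{e_{\bm{k}} f_j\}$ an orthonormal set in $L^2_{\cle_2}(\mathbb{T}^n)$, and compares $\dim \cle_1 = \sum_j \|f_j\|^2$ (expanded by Parseval in the basis $\{e_{\bm{k}}\eta_l\}$) with the Bessel bound $\sum_{l=1}^m \|\eta_l\|^2 = m$; this double-counting argument is completely elementary and self-contained. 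Your route buys more (an a.e.\ fibrewise isometry, i.e.\ essentially the boundary-value statement the authors say they are bypassing), at the cost of the decomposable-operator theorem and its separability hypotheses; the paper's route buys brevity and independence from that machinery. One small caveat in your separability reduction: the parenthetical claim that $\dim \cle_1 \leq \dim \cle_2$ is \emph{equivalent} to every separable subspace of $\cle_1$ embedding isometrically into $\cle_2$ is false as stated (take $\dim\cle_1$ uncountable and $\dim\cle_2 = \aleph_0$); the clean fix is the same triviality the paper uses, namely that when $\dim\cle_2$ is infinite the existence of the isometry $\tilde{X}$ already forces $\dim\cle_1 \leq \dim L^2_{\cle_1}(\mathbb{T}^n) \leq \dim L^2_{\cle_2}(\mathbb{T}^n) = \dim\cle_2$, so the direct-integral argument only needs to handle $\dim\cle_2 \leq \aleph_0$, where separability is automatic.
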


\begin{proof}
By the triviality, we can assume that
\[
m : = \dim \cle_2 < \infty.
\]
Let $\{\eta_j\}_{j=1}^m$ be an orthonormal basis for
$\mathcal{E}_2$. Since $\{e_{\bm{k}} :\bm{k} \in \mathbb{Z}^n\}$,
where
\[
e_{\bm{k}} = \prod_{j=1}^n e^{i k_j \theta_j} \quad \quad (\bm{k}
\in \mathbb{Z}^n),
\]
is an orthonormal basis for $L^2(\mathbb{T}^n)$, this implies that
$\{e_{\bm{k}} \eta_j :\bm{k} \in \mathbb{Z}^n, j = 1, \ldots, n\}$
is an orthonormal basis for $L^2_{\mathcal{E}_2}(\mathbb{T}^n)$. Let
$\{f_j: j \in J\}$ be an orthonormal basis for
$\tilde{X}(\mathcal{E}_1)$, where $J$ is a subset of $\mathbb{Z}_+$.
In view of the intertwining property of $\tilde{X}$, this implies
that $\{e_{\bm{k}} f_j: \bm{k} \in \mathbb{Z}^n, j \in J\}$ is an
orthonormal basis for
\[
\widetilde{X}( L^2_{\mathcal{E}_1}(\mathbb{T}^n)) \subseteq
L^2_{\mathcal{E}_2}(\mathbb{T}^n),
\]
and so, an orthonormal set in $L^2_{\mathcal{E}_2}(\mathbb{T}^n)$.
It follows from the Parseval's identity that
\[
\begin{split}
\dim \cle_1 & = \dim (\tilde{X} \cle_1)
\\
& = \sum_{j \in J} \|f_j\|^2
\\
& = \sum_{j \in J} \sum_{l=1}^m \sum_{\bm{k} \in \mathbb{Z}^n}|
\langle M^{\bm{k}}_{e^{i \theta}}\eta_l, f_j \rangle |^2
\\
& = \sum_{j \in J} \sum_{l=1}^m \sum_{\bm{k} \in \mathbb{Z}^n}|
\langle \eta_l, M^{\bm{k}}_{e^{i \theta}} f_j \rangle |^2
\\
& = \sum_{j \in J} \sum_{l=1}^m \sum_{\bm{k} \in \mathbb{Z}^n}|
\langle \eta_l, e_{\bm{k}} f_j \rangle |^2,
\end{split}
\]
on the one hand, and on the other, by Bessel's Inequality,
\[
\begin{split}
m & = \sum_{l=1}^m \|\eta_l\|^2
\\
& \geq \sum_{l=1}^m \sum_{j \in J} \sum_{\bm{k} \in \mathbb{Z}^n}|
\langle \eta_l, e_{\bm{k}} f_j \rangle |^2.
\end{split}
\]
This proves $\dim \mathcal{E}_1 \leq m$ and completes the proof of
the theorem.
\end{proof}

\NI\textit{Proof of Theorem \ref{thm-fibre}:} Define $\tilde{X}$ on
$\{e_{\bm{k}} \eta: \bm{k} \in \mathbb{Z}^n, \eta \in \cle_1\}$ by
\[
\tilde{X} (e_{\bm{k}} \eta) = e_{\bm{k}} X \eta,
\]
for all $\bm{k} \in \mathbb{Z}^n$ and $\eta \in \cle_1$. The
intertwining property of the isometry $X$ then gives
\[
\langle \tilde{X} (e_{\bm{k}} \eta), \tilde{X} (e_{\bm{l}} \zeta)
\rangle_{L^2_{\cle_2}(\mathbb{T}^n)} = \langle e_{\bm{k}} \eta,
e_{\bm{l}} \zeta \rangle_{L^2_{\cle_1}(\mathbb{T}^n)},
\]
for all $\bm{k}, \bm{l} \in \mathbb{Z}^n$ and $\eta, \zeta \in
\cle_1$. Therefore this map extends uniquely to an isometry, denoted
again by $\tilde{X}$ from $L^2_{\cle_1}(\mathbb{T}^n)$ to
$L^2_{\cle_2}(\mathbb{T}^n)$, such that
\[
\tilde{X} M_{e^{i\theta_j}} = M_{e^{i\theta_j}} \tilde{X},
\]
for all $j = 1, \ldots, n$. The result then easily follows from
Theorem \ref{thm-Lfibre}. \qed

If $X : H^2_{\cle_1}(\D^n) \raro H^2_{\cle_2}(\D^n)$ is an isometry,
and if $X M_{z_i} = M_{z_i} X$ for all $i = 1, \ldots, n$, then it
is easy to see that
\[
X = M_{\Theta},
\]
for some isometric multiplier $\Theta \in H^\infty_{\clb(\cle_1,
\cle_2)}(\D^n)$ (that is, $M_{\Theta} : H^2_{\cle_1}(\D^n) \raro
H^2_{\cle_2}(\D^n)$ is an isometry). In the case $n = 1$, the
conclusion of Theorem \ref{thm-fibre} follows from the boundary
behavior of bounded analytic functions on the open unit disc:
$M_{\Theta}$ is an isometry if and only if $\Theta(e^{i\theta})$ is
isometry a.e. on $\mathbb{T}$ (cf. \cite{NF}). Unlike the proof of
the classical case $n = 1$, our proof does not use the boundary
behavior of $\Theta$.

\vspace{0.3in}

\NI\textit{Acknowledgement:} The first author's research work is
supported by National Post Doctoral Fellowship (N-PDF), File No.
PDF/2017/001856. The research of the third author was supported in
part by NBHM (National Board of Higher Mathematics, India) Research
Grant NBHM/R.P.64/2014.

\end{document}